\newtheorem{theorem}{Theorem}[section]
\newtheorem{lemma}[theorem]{Lemma}
\newtheorem{corollary}[theorem]{Corollary}
\newcommand{\F}{\mbox{$\cal F$}}
\title{Characterizations of minimal graphs with equal edge connectivity and spanning tree packing number
\thanks{ The paper was published with a different title ``Characterizations of strength extremal graphs'' in
Graphs and Combinatorics 30 (2014) 1453-1461.}}
\author{Xiaofeng Gu$^{1}$, 
Hong-Jian Lai$^{2,3}$, 
Ping Li$^{4}$, 
Senmei Yao$^5$\\
\footnotesize $^1$Department of Mathematics and Computer Science, University of Wisconsin-Superior,
Superior, WI 54880, USA\\
\footnotesize $^2$Department of Mathematics, West Virginia University, Morgantown, WV 26506, USA\\
\footnotesize $^3$College of Mathematics and System Sciences, Xinjiang University, Urumqi,
Xinjiang 830046, PRC\\
\footnotesize $^4$Department of Mathematics, Beijing Jiaotong University, Beijing 100044, PRC\\
\footnotesize $^5$Department of Mathematics, School of Arts and Sciences, Marian University, Fond du Lac, WI 54935, USA
}
\begin{document}
\date{}
\maketitle
\noindent

\begin{abstract}
With graphs considered as natural models for many network design problems,
edge connectivity $\kappa'(G)$ and maximum number of edge-disjoint spanning trees
$\tau(G)$ of a graph $G$ have been used as measures for reliability
and strength in communication networks modeled as
graph $G$ (see \cite{Cunn85, Matula87}, among others).
Mader \cite{Mader71} and Matula \cite{Matula72} introduced the maximum
subgraph edge connectivity 
$\overline{\kappa'}(G)=\max \{\kappa'(H): H \mbox{ is a subgraph of } G \}$.
Motivated by their applications in network design and by the established inequalities
\[
\overline{\kappa'}(G)\ge \kappa'(G) \ge \tau(G),
\]
we present the following in this paper:
\\
(i) For each integer $k>0$, a characterization for graphs $G$ with the property
that $\overline{\kappa'}(G) \le k$ but for any edge $e$
not in $G$, $\overline{\kappa'}(G+e)\ge k+1$.
\\
(ii) For any integer $n > 0$, a characterization for graphs
$G$ with $|V(G)| = n$ such that $\kappa'(G) = \tau(G)$ with $|E(G)|$ minimized.
\end{abstract}

{\small \noindent {\bf Key words:} edge connectivity, edge-disjoint spanning trees, 
$k$-maximal graphs, network strength, network reliability}

\section{Introduction}

With graphs considered as natural models for many network design problems,
edge connectivity and maximum number of edge-disjoint spanning trees
of a graph have been used as measures for reliability
and strength in communication networks modeled as
a graph (see \cite{Cunn85, Matula87}, among others).

We consider finite graphs with possible multiple edges, and
follow notations of Bondy and Murty \cite{BoMu08}, 
unless otherwise defined. Thus for a graph $G$, $\omega(G)$ denotes
the number of components of $G$, and $\kappa'(G)$ denotes the edge connectivity of $G$.
For a connected graph $G$, $\tau(G)$ denotes the maximum number of edge-disjoint spanning 
trees in $G$. A survey on $\tau(G)$ can be found in \cite{Palmer01}.
By definition, $\tau(K_1)=\infty$. A graph $G$ is {\bf nontrivial} if $|E(G)|\neq\emptyset$.

For any graph $G$, we further define 
$\overline{\kappa'}(G)=\max \{\kappa'(H): H \mbox{ is a subgraph of } G \}$.
The invariant $\overline{\kappa'}(G)$, first introduced by Matula~\cite{Matula72}, has been
studied by Boesch and McHugh~\cite{BoMc85}, by Lai~\cite{Lai90}, by Matula~\cite{Matula72,Matula87},
by Mitchem~\cite{Mitchem77} and implicitly by Mader~\cite{Mader71}. In~\cite{Matula87}, Matula
gave a polynomial algorithm to determine $\overline{\kappa'}(G)$.

Throughout the paper, $k$ and $n$ denote positive integers, unless otherwise defined.

Mader \cite{Mader71} first introduced $k$-maximal graphs.
A graph $G$ is {\bf $k$-maximal} if $\overline{\kappa'}(G)\le k$ but for any edge
$e\not\in E(G)$, $\overline{\kappa'}(G+e)\ge k+1$.
The $k$-maximal graphs have been studied in 
\cite{BoMc85, Mader71,Matula72, Matula87, Mitchem77, Lai90}, among others.

Simple $k$-maximal graphs have been well studied. In~\cite{Mader71}, Mader proved that
the maximum number of edges in a simple $k$-maximal graph with $n$ vertices is
$(n-k)k+ {k\choose 2}$ and characterized all the extremal graphs.
In 1990, Lai~\cite{Lai90}
showed that the minimum number of edges in a simple $k$-maximal graph with $n$ vertices
is $(n-1)k-{k\choose 2}\lfloor \frac{n}{k+2}\rfloor$. In the same paper, Lai also 
characterized all extremal graphs and all simple $k$-maximal graphs.

In this paper, we mainly focus on multiple $k$-maximal graphs, and show that the number
of edges in a $k$-maximal graph with $n$ vertices is $k(n-1)$ and give a complete
characterization of all $k$-maximal graphs as well as show several equivalent graph families.

As it is known that for any 
connected graph $G$, $\kappa'(G) \ge \tau(G)$, it is natural to ask when the equality holds.
Motivated by this question, we characterize all graphs $G$ satisfying
$\kappa'(G)=\tau(G)$ with minimum number of possible edges for a fixed number of vertices.
We also investigate necessary and sufficient conditions for a graph to have
a spanning subgraph with this property or to be a spanning subgraph of another graph with
this property.

In Section 2, we display some preliminaries. In Section 3, we will characterize all
$k$-maximal graphs. The characterizations of minimal graphs with $\kappa'=\tau$ and
reinforcement problems will be discussed in Sections 4 and 5, respectively.

In this paper, an edge-cut always means a minimal edge-cut.

\section{Preliminaries}

Let $G$ be a nontrivial graph. The {\bf density} of $G$ is defined by
\begin{eqnarray}
\label{defdg}
d(G)=\frac{|E(G)|}{|V(G)|-\omega(G)}.
\end{eqnarray}
Hence, if $G$ is connected, then $d(G)=\frac{|E(G)|}{|V(G)|-1}$.
Following the terminology in \cite{CGHL92}, we define $\eta(G)$ and $\gamma(G)$
as follows:
\[
\eta(G)=\min\frac{|X|}{\omega(G-X)-\omega(G)} \mbox{ and }
\gamma(G)=\max \{d(H)\},
\]
where the minimum or maximum is taken over all edge subsets $X$ or subgraph $H$
whenever the denominator is non-zero. From the definitions of 
$d(G)$, $\eta(G)$ and $\gamma(G)$, we have, for any nontrivial graph $G$,
\begin{eqnarray}
\label{etdga}
\eta(G)\le d(G)\le \gamma(G).
\end{eqnarray}

As in \cite{CGHL92}, a graph $G$ satisfying $d(G)= \gamma(G)$ is said to be
{\bf uniformly dense}.
The following theorems are well known.

\begin{theorem} (Nash-Williams \cite{Nash61} and Tutte \cite{Tutte61})
\label{NaTu}
\\Let $G$ be a connected graph with $E(G)\neq \emptyset$, and let $k>0$ be
an integer. Then $\tau(G)\ge k$ if and only if for any $X\subseteq E(G),
|X|\ge k(\omega(G-X)-1)$.
\end{theorem}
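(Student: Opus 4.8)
The plan is to prove the two implications separately, after first recording that the stated edge-set condition is equivalent to the more familiar partition condition: for every partition $\mathcal{P}=\{V_1,\dots,V_r\}$ of $V(G)$, the number $e_G(\mathcal{P})$ of edges joining distinct parts satisfies $e_G(\mathcal{P})\ge k(r-1)$. Given such a partition, the set $X$ of crossing edges has $\omega(G-X)\ge r$, so the edge-set condition yields $|X|\ge k(\omega(G-X)-1)\ge k(r-1)$; conversely, for any $X\subseteq E(G)$ the components of $G-X$ induce a partition whose crossing edges all lie in $X$, so the partition condition forces $|X|\ge k(\omega(G-X)-1)$. This reformulation makes both directions cleaner.

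Necessity is the easy direction, which I would dispatch first. Suppose $T_1,\dots,T_k$ are edge-disjoint spanning trees and fix $X\subseteq E(G)$. Contracting each component of $G-X$ to a single vertex turns each $T_i$ into a connected spanning subgraph of the contracted graph on $\omega(G-X)$ vertices, so $T_i$ must use at least $\omega(G-X)-1$ edges joining distinct components, and every such edge lies in $X$. Since the trees are edge-disjoint, summing over $i$ gives $|X|\ge\sum_{i=1}^{k}|E(T_i)\cap X|\ge k(\omega(G-X)-1)$, as required.

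Sufficiency is where the real work lies, and I see two natural routes. The quickest is to invoke the matroid base-packing theorem of Nash-Williams and Edmonds applied to the cycle matroid $M(G)$: this matroid contains $k$ disjoint bases if and only if $k\cdot r(E(G))\le |E(G)\setminus A|+k\cdot r(A)$ for every $A\subseteq E(G)$. Writing $X=E(G)\setminus A$ and using $r(A)=|V(G)|-\omega(G-X)$ together with $r(E(G))=|V(G)|-1$, this rank inequality collapses exactly to $|X|\ge k(\omega(G-X)-1)$, while $k$ disjoint bases of $M(G)$ are precisely $k$ edge-disjoint spanning trees. If a self-contained argument is preferred, I would instead take a family of $k$ edge-disjoint forests $F_1,\dots,F_k$ maximizing $\sum_i|E(F_i)|$ and, assuming they are not all spanning trees (so the total is below $k(|V(G)|-1)$), run a matroid-union augmentation: build the exchange digraph recording, for each unused edge $e$ and each $i$, the edges on the cycle created in $F_i+e$, and search for an augmenting sequence that frees an edge which can be added to some forest across two of its components. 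The main obstacle is the endgame of this search. When no augmentation exists, one must show that the edges reachable in the exchange digraph, together with the partition of $V(G)$ into the forest-components they determine, produce a set $X$ saturating $|X|=k(\omega(G-X)-1)$ with too few remaining edges to span, contradicting the hypothesis. Verifying that the reachable edges close up into a genuine partition and that the resulting count is exactly tight is the delicate uncrossing step, whereas everything preceding it is bookkeeping.
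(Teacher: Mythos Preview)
The paper does not supply a proof of this theorem; it is quoted from the literature (Nash-Williams and Tutte) as a preliminary fact, so there is no ``paper's own proof'' to compare against. Your proposal, however, is a sound sketch of the classical argument. The reduction to the partition formulation is correct, the necessity direction is complete as written, and your first route for sufficiency (deducing it from the matroid base-packing theorem with the rank computation $r(A)=|V(G)|-\omega(G-X)$ for $X=E(G)\setminus A$) is exactly right and would serve as a full proof once base-packing is granted. The second, self-contained route via maximal edge-disjoint forests and exchange augmentation is also the right idea, though as you note the endgame (extracting a tight partition when no augmentation exists) is where the care is needed; what you have is an honest outline rather than a proof, but the structure is correct.
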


Theorem~\ref{NaTu} indicates that for a connected graph $G$
\begin{eqnarray}
\label{taueta}
\tau(G)=\lfloor\eta(G)\rfloor.
\end{eqnarray}

\begin{theorem} (Catlin et al. \cite{CGHL92})
\label{cat92}
\\Let $G$ be a graph. The following statements are equivalent.
\\(i) $\eta(G)=d(G)$.
\\(ii) $d(G)=\gamma(G)$.
\\(iii) $\eta(G)=\gamma(G)$.
\end{theorem}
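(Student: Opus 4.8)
My plan is to use the sandwich inequality $\eta(G) \le d(G) \le \gamma(G)$ from (\ref{etdga}) as the backbone. This makes two of the implications immediate: if $\eta(G) = \gamma(G)$, then the middle quantity $d(G)$ is trapped between two equal numbers and must equal both, so (iii) $\Rightarrow$ (i) and (iii) $\Rightarrow$ (ii) are free. The real content is therefore to prove (i) $\Rightarrow$ (iii) and (ii) $\Rightarrow$ (iii), after which all three statements are equivalent.

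For (ii) $\Rightarrow$ (iii), assume $d(G) = \gamma(G)$; since $\eta(G) \le d(G)$ already holds, it suffices to show $\eta(G) \ge d(G)$. I would take an arbitrary edge set $X$ with $\omega(G-X) > \omega(G)$ and compare the density of the subgraph $G-X$ with that of $G$. Because $G-X$ is a subgraph, $d(G-X) \le \gamma(G) = d(G)$; writing both densities with $n = |V(G)|$, $m = |E(G)|$, $c = \omega(G)$, $c' = \omega(G-X)$, and clearing the positive denominators in $\frac{m-|X|}{n-c'} \le \frac{m}{n-c}$, the expression collapses after cancellation to $|X| \ge d(G)(c'-c)$. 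Taking the minimum over all admissible $X$ gives $\eta(G) \ge d(G)$, so equality propagates across the chain and (iii) follows.

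For (i) $\Rightarrow$ (iii), assume $\eta(G) = d(G)$; since $d(G) \le \gamma(G)$ it suffices to show $d(H) \le d(G)$ for every nontrivial subgraph $H$. The key reduction is that the maximum defining $\gamma(G)$ may be taken over connected induced subgraphs: the density of a disconnected graph is a mediant of the densities of its components, hence at most the largest of them, so a densest subgraph may be assumed connected, and enlarging a connected subgraph to the full induced subgraph on the same vertex set keeps it connected while only raising the numerator. Thus I may assume $H = G[S]$ is connected, set $X = E(G) \setminus E(H)$ so that $G-X$ is $H$ together with $n - |S|$ isolated vertices, and apply the strength inequality $|X| \ge d(G)(\omega(G-X) - \omega(G))$, which, read via $\eta(G) = d(G)$, holds for \emph{every} $X$ (trivially when the right side is zero). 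Substituting $\omega(G-X) = 1 + (n - |S|)$ and $m = d(G)(n - c)$, the $d(G)\,n$ and $d(G)\,c$ terms cancel and I am left precisely with $|E(H)| \le d(G)(|S| - 1)$, that is $d(H) \le d(G)$; hence $\gamma(G) \le d(G)$ and (iii) follows.

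The main obstacle, and the step I would treat most carefully, is the bookkeeping around degenerate cases: ensuring the strength inequality is legitimately invoked for every $X$ (including $X = E(G)$, where $\omega(G-X) - \omega(G)$ may vanish and the inequality must be read as the trivial bound $|X| \ge 0$), and justifying that passing to a connected, isolated-vertex-free maximizer for $\gamma(G)$ does not change the supremum of the density. Once these reductions are secured, each of the two nontrivial implications reduces to a one-line cancellation, and together with the trivial implications afforded by (\ref{etdga}) they close the cycle of equivalences.
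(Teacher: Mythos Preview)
The paper does not actually prove Theorem~\ref{cat92}; it is quoted as a known result of Catlin, Grossman, Hobbs and Lai \cite{CGHL92} and used as a black box. So there is no ``paper's own proof'' to compare against.

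That said, your argument is correct and is essentially the standard direct proof. The sandwich inequality (\ref{etdga}) indeed makes (iii)$\Rightarrow$(i) and (iii)$\Rightarrow$(ii) immediate. For (ii)$\Rightarrow$(iii), applying $d(G-X)\le\gamma(G)=d(G)$ and clearing denominators gives $|X|\ge d(G)(\omega(G-X)-\omega(G))$ exactly as you wrote; the borderline case $X=E(G)$ (where $d(G-X)$ is undefined) yields $|X|/(\omega(G-X)-\omega(G))=m/(n-c)=d(G)$ directly. For (i)$\Rightarrow$(iii), your reduction to connected induced subgraphs is valid (mediant for connectedness, then pass to the induced subgraph on the same vertex set, which stays connected and only gains edges), and your component count $\omega(G-X)=1+(n-|S|)$ is right because $G-X$ has edge set exactly $E(H)$. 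The only point worth making explicit is that $\omega(G-X)\ge\omega(G)$ always holds since deleting edges cannot merge components, so the right-hand side of the strength inequality is never negative and the case $\omega(G-X)=\omega(G)$ reduces to $|X|\ge 0$; your cancellation then goes through uniformly.
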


For a connected graph $G$ with $\tau(G) \ge k$, we define
$E_k(G) =\{e \in E(G): \tau(G-e) \ge k\}$.

\begin{lemma} (Lai et al. \cite{LaLL10b}, Li~\cite{Li12})
\label{Lailem}
\\ Let $G$ be a connected graph with $\tau(G)\ge k$. Then $E_k(G)=\emptyset$ if and only if $d(G)=k$.
\end{lemma}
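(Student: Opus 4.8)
The plan is to exploit the two displayed identities \eqref{etdga} and \eqref{taueta} together with a supermodularity argument on the cycle matroid of $G$. Since $G$ is connected, $d(G)=|E(G)|/(|V(G)|-1)$. First observe that $\tau(G)\ge k$ forces $\eta(G)\ge k$ by \eqref{taueta} (because $\lfloor\eta(G)\rfloor\ge k$ iff $\eta(G)\ge k$), and then \eqref{etdga} gives $d(G)\ge\eta(G)\ge k$. Hence ``$d(G)\neq k$'' is the same as ``$d(G)>k$'', and the lemma splits into (a) $d(G)=k\Rightarrow E_k(G)=\emptyset$ and (b) $d(G)>k\Rightarrow E_k(G)\neq\emptyset$. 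For (a), fix any $e\in E(G)$. If $G-e$ is disconnected then it has no spanning tree and $\tau(G-e)=0<k$. If $G-e$ is connected, then $|E(G)|=k(|V(G)|-1)$ yields $d(G-e)=k-\tfrac{1}{|V(G)|-1}<k$, so by \eqref{etdga} and \eqref{taueta}, $\tau(G-e)=\lfloor\eta(G-e)\rfloor\le\lfloor d(G-e)\rfloor\le k-1$. Either way $e\notin E_k(G)$, so $E_k(G)=\emptyset$.

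For the harder direction (b) I would recast the Nash--Williams--Tutte condition of Theorem~\ref{NaTu} in matroid terms. Let $r$ be the rank function of the cycle matroid of $G$ and, for $Z\subseteq E(G)$, set $g(Z)=|Z|-k\,r(Z)$. Using $r(E(G)\setminus X)=|V(G)|-\omega(G-X)$, one checks for a connected graph $H$ that $\eta(H)\ge k$ is equivalent, via Theorem~\ref{NaTu} and \eqref{taueta}, to the statement that $E(H)$ maximizes the corresponding function $g_H$ over all edge subsets. In particular $\eta(G)\ge k$ says that $E(G)$ maximizes $g$, while $d(G)>k$ says precisely that $g(E(G))=|E(G)|-k(|V(G)|-1)>0=g(\emptyset)$.

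The structural heart of (b) is that $g$ is supermodular, since $r$ is submodular and $|\cdot|$ is modular; hence the family of maximizers of $g$ is closed under intersection and has a unique minimal member $Z_{\min}$, equal to the intersection of all maximizers. Because $g(\emptyset)=0<g(E(G))=\max g$, the empty set is not a maximizer, so $Z_{\min}\neq\emptyset$. Moreover $Z_{\min}$ contains no bridge: if $e$ is a coloop of the cycle matroid lying in a maximizer $Z^*$, then $g(Z^*\setminus e)=g(Z^*)+(k-1)\ge g(Z^*)$, so $Z^*\setminus e$ is again a maximizer and $e\notin Z_{\min}$. Pick any $e\in Z_{\min}$; it is a non-bridge lying in every maximizer. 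Then every $Z\not\ni e$ has $g(Z)\le g(E(G))-1$, whereas $g(E(G)\setminus e)=g(E(G))-1$ since $e$ is not a coloop; thus $E(G)\setminus e$ maximizes $g$ over all subsets of $E(G-e)$, which is exactly the reformulated condition $\eta(G-e)\ge k$. By \eqref{taueta}, $\tau(G-e)\ge k$, so $e\in E_k(G)$ and $E_k(G)\neq\emptyset$.

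I expect the main obstacle to be step (b): exhibiting a concrete removable edge. The cleanest route is the supermodularity of $g$, which produces a well-defined minimal densest edge set and, via the coloop computation, an edge of it that is simultaneously a non-bridge and common to all maximizers; deleting such an edge is precisely what preserves $\tau\ge k$. A more elementary but bookkeeping-heavy alternative would be to work on a vertex-maximal subgraph $H$ with $d(H)=\gamma(G)>k$ and locate a redundant edge there, but verifying that its deletion does not push $\eta$ below $k$ globally is exactly where the supermodular/lattice argument does the real work.
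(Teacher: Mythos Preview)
The paper does not prove Lemma~\ref{Lailem}; it is quoted from \cite{LaLL10b,Li12} without argument. So there is no ``paper's own proof'' to compare against, and the relevant question is simply whether your argument stands on its own. It does.

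Direction (a) is routine and correct: from $\tau(G)\ge k$ and \eqref{taueta}, \eqref{etdga} you get $d(G)\ge k$, so $d(G)=k$ means $|E(G)|=k(|V(G)|-1)$; deleting any edge either disconnects $G$ or drops the density strictly below $k$, whence $\tau(G-e)<k$ in either case.

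For direction (b) your matroidal recasting is sound. With $r$ the rank function of the cycle matroid and $g(Z)=|Z|-k\,r(Z)$, the substitution $Y=E(G)\setminus X$ together with $r(Y)=|V(G)|-\omega(G-X)$ turns Theorem~\ref{NaTu} into the statement that $\tau(G)\ge k$ iff $E(G)$ maximizes $g$. Supermodularity of $g$ (from submodularity of $r$ and modularity of $|\cdot|$) makes the set of maximizers a sublattice, hence gives a unique minimum maximizer $Z_{\min}$; your coloop computation $g(Z^*\setminus e)=g(Z^*)+(k-1)$ correctly shows bridges are excluded from $Z_{\min}$ (for $k\ge 2$ no maximizer can contain a bridge at all, and for $k=1$ removing a bridge yields another maximizer). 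Picking $e\in Z_{\min}$, you then use that $e$ lies in every maximizer and that $g$ is integer-valued to conclude $g(Z)\le g(E(G))-1$ for all $Z\not\ni e$, while $g(E(G)\setminus e)=g(E(G))-1$ because $e$ is not a coloop. Since the cycle matroid of $G-e$ is the restriction of that of $G$, this says exactly that $E(G-e)$ maximizes $g_{G-e}$, i.e.\ $\tau(G-e)\ge k$.

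This is a clean, self-contained proof; in fact the supermodular/lattice argument is essentially the matroid version of the ``principal partition'' machinery that \cite{LaLL10b} develops, so your approach is close in spirit to the cited source even if packaged differently.
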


\begin{lemma}(Haas \cite{Haas02}, Lai et al. \cite{LaLL10a} and Liu et al. \cite{LiLC09})
\label{Haaslem} 
\\Let $G$ be a graph, then the following statements are equivalent.
\\(i) $\gamma(G)\le k$.
\\(ii) There exist $k(|V(G)|-1)-|E(G)|$ edges whose addition to $G$ results in a graph
that can be decomposed into $k$ edge-disjoint spanning trees.
\end{lemma}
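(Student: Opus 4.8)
The plan is to prove the two implications separately, with the reverse direction (ii) $\Rightarrow$ (i) being essentially a counting argument and the forward direction (i) $\Rightarrow$ (ii) requiring an edge-augmentation procedure controlled by submodularity. Throughout, write $e(S)$ for the number of edges of $G$ with both ends in $S$.

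For (ii) $\Rightarrow$ (i), suppose $G'$ is obtained from $G$ by adding the stated number of edges and $G' = T_1 \cup \cdots \cup T_k$ is a decomposition into $k$ edge-disjoint spanning trees. Given any subgraph $H$ of $G'$, the sets $E(T_i) \cap E(H)$ partition $E(H)$, and each spans a forest on $V(H)$ whose number of components is at least $\omega(H)$; hence $|E(T_i) \cap E(H)| \le |V(H)| - \omega(H)$. Summing over $i$ gives $|E(H)| \le k(|V(H)| - \omega(H))$, i.e. $d(H) \le k$. Taking the maximum over all subgraphs $H$ yields $\gamma(G') \le k$, and since $G$ is a subgraph of $G'$ this forces $\gamma(G) \le \gamma(G') \le k$.

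For (i) $\Rightarrow$ (ii), I would first reformulate the hypothesis: using that the densest subgraph may be taken connected and induced, $\gamma(G) \le k$ is equivalent to $e(S) \le k(|S|-1)$ for every $S \subseteq V(G)$ with $|S| \ge 2$. Applied to $S = V(G)$ this already gives $|E(G)| \le k(|V(G)|-1)$, so the number of edges to add is nonnegative. Define $f(S) = k(|S|-1) - e(S)$; since $S \mapsto e(S)$ is supermodular and $S \mapsto k(|S|-1)$ is modular, $f$ is submodular and nonnegative on all sets of size at least $2$. Call $S$ \emph{tight} if $f(S) = 0$. The key claim is that whenever $|E(G)| < k(|V(G)|-1)$ (so $V(G)$ is not tight), there is a pair of distinct vertices $u,v$ lying in no common tight set; adding a (possibly parallel) edge $uv$ keeps $e(S) \le k(|S|-1)$ for every $S$, so $\gamma(G+uv) \le k$. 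Iterating adds exactly $k(|V(G)|-1) - |E(G)|$ edges and produces $G'$ with $\gamma(G') \le k$ and $|E(G')| = k(|V(G)|-1)$. A short density computation shows $G'$ must be connected, whence $d(G') = k = \gamma(G')$; Theorem~\ref{cat92} gives $\eta(G') = k$, equation (\ref{taueta}) gives $\tau(G') = k$, and since $|E(G')| = k(|V(G')|-1)$ the $k$ trees exhaust $E(G')$, so $G'$ decomposes into $k$ edge-disjoint spanning trees.

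The main obstacle is the key claim about the existence of an addable edge, which I would establish via submodularity. If $S$ and $T$ are tight with $S \cap T \neq \emptyset$, then $f(S \cup T) + f(S \cap T) \le f(S) + f(T) = 0$ together with nonnegativity of $f$ forces $S \cup T$ to be tight. Consequently, if every pair of vertices were contained in some tight set, then fixing any $v$ and taking the union $U_v$ of all tight sets containing $v$ (which pairwise share $v$) would itself be tight, and the pairing hypothesis would force $U_v = V(G)$, making $V(G)$ tight and contradicting $|E(G)| < k(|V(G)|-1)$. Hence the desired pair $u,v$ exists. The remaining steps are routine, but care is needed to use integrality of $f$ (so $f(S) \ge 1$ on non-tight sets, guaranteeing $f \ge 0$ is preserved after adding the edge) and to dispose of the trivial case $|V(G)| = 1$ separately.
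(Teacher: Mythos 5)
Your proof is correct, but note that the paper does not prove Lemma~\ref{Haaslem} at all: it is quoted as a known result with citations to Haas, to Lai--Li--Liang--Xu, and to Liu--Lai--Chen, so there is no in-paper argument to compare against. Your two directions both check out. The easy direction is the standard forest-counting bound $|E(H)|\le k(|V(H)|-\omega(H))$, which is exactly how one would expect it to be done. For the hard direction, the cited sources derive the statement from the Nash-Williams/Tutte circle of ideas (arboricity, principal partition, or the matroid-union formulation of reinforcing a matroid to have $k$ disjoint bases), essentially getting the augmentation for free from Theorem~\ref{NaTu} applied to a suitably enlarged graph. You instead give a direct, self-contained augmentation: reformulating $\gamma(G)\le k$ as $e(S)\le k(|S|-1)$ for all $S$ (valid, since a densest subgraph can be taken connected and induced, and the disconnected case follows by summing over components), uncrossing tight sets via submodularity of $f(S)=k(|S|-1)-e(S)$, and using integrality to add one edge at a time. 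This is a legitimately different and more elementary route; it also yields an explicit greedy procedure for choosing the added edges, at the cost of having to verify the uncrossing details (in particular that $f$ is nonnegative on singletons, which you implicitly use when intersecting two tight sets meeting in a single vertex, and the final connectivity and decomposition step via Theorem~\ref{cat92} and (\ref{taueta}), both of which you handle correctly). The only points I would insist you write out in full are the trivial cases $|V(G)|=1$ and $k(|V(G)|-1)=|E(G)|$, and the observation that the hypothesis $e(S)\le k(|S|-1)$ is re-established after each augmentation so the iteration is sound; as sketched, neither is a gap.
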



\section{Characterizations of $k$-maximal graphs}

In this section, we are to present a structural characterization of $k$-maximal graphs  
as well as several equivalent conditions, as shown in Theorem~\ref{charact}.

Let $F(n,k)$ be the maximum number of edges in a graph $G$ on $n$ vertices
with $\overline{\kappa'}(G)\le k$.
We define ${\cal F}(n,k)=\{G: |E(G)|=F(n,k), |V(G)|=n, \overline{\kappa'}(G)\le k\}$.

Let $G_1$ and $G_2$ be connected graphs such that $V(G_1)\cap V(G_2)=\emptyset$.
Let $K$ be a set of $k$ edges each of which has one vertex in $V(G_1)$ and the other 
vertex in $V(G_2)$. The {\bf $K$-edge-join} $G_1*_K G_2$ is defined to be the graph 
with vertex set $V(G_1)\cup V(G_2)$ and edge set $E(G_1)\cup E(G_2)\cup K$.
When the set $K$ is not emphasized, we use $G_1*_k G_2$ for $G_1*_K G_2$, and refer
to $G_1*_k G_2$ as a $k$-edge-join.

Let ${\cal G}_k$ be a family of graphs such that
for any $G_1, G_2\in {\cal G}_k\cup \{K_1\}, G_1 *_k G_2\in {\cal G}_k$.
Let $\overline{\tau}(G)=\max \{\tau(H): H \mbox{ is a subgraph of } G \}$.
The main theorem in this section is stated below.

\begin{theorem}
\label{charact}
Let $G$ be a graph on $n$ vertices. The following statements are equivalent.
\\(i) $G\in {\cal F}(n,k)$;
\\(ii) $G$ is $k$-maximal;
\\(iii) $\eta(G)=\overline{\kappa'}(G)=k$;
\\(iv) $\tau(G)=\overline{\kappa'}(G)=k$;
\\(v) $\tau(G)=\overline{\tau}(G)=\kappa'(G)=\overline{\kappa'}(G)=k$;
\\(vi) $G\in {\cal G}_k$.
\end{theorem}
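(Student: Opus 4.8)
The plan is to prove Theorem~\ref{charact} by establishing a single cycle of implications, after first isolating one counting lemma that drives everything. The first step I would carry out is to show that $\overline{\kappa'}(G)\le k$ forces $|E(G)|\le k(|V(G)|-\omega(G))$, and hence $\gamma(G)\le k$. I would obtain the edge bound by induction on $|V(G)|$: for connected $G$ with at least two vertices, $\kappa'(G)\le\overline{\kappa'}(G)\le k$ produces a minimal edge cut $[V_1,V_2]$ of size at most $k$; applying the inductive hypothesis to $G[V_1]$ and $G[V_2]$ (each has $\overline{\kappa'}\le k$) and adding back the at most $k$ crossing edges gives the bound, and summing over components covers the disconnected case. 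Applying the same bound to every subgraph $H$ yields $d(H)\le k$ for all $H$, i.e. $\gamma(G)\le k$; equivalently, by contraposition, $\gamma(H)>k$ implies $\overline{\kappa'}(H)\ge k+1$. This one inequality, together with Theorem~\ref{cat92} and the identity $\tau(G)=\lfloor\eta(G)\rfloor$ from (\ref{taueta}), does most of the work, and it already gives $F(n,k)\le k(n-1)$. Throughout I would treat the substantive connected case: conditions (iv) and (v) contain $\tau(G)=k\ge 1$, which forces a spanning tree and hence connectivity, and I would read the remaining conditions with $G$ connected (see the obstacle paragraph).

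I would order the cycle as (ii)$\Rightarrow$(vi)$\Rightarrow$(i)$\Rightarrow$(v)$\Rightarrow$(iv)$\Rightarrow$(iii)$\Rightarrow$(ii). The density implications are then short. For (i)$\Rightarrow$(v): a connected $G$ with $|E(G)|=k(n-1)$ has $d(G)=k$, so $\gamma(G)\le k$ from the lemma and $\gamma(G)\ge d(G)$ give $\gamma(G)=d(G)=k$; Theorem~\ref{cat92} yields $\eta(G)=k$ and (\ref{taueta}) yields $\tau(G)=k$, after which the chains $\tau(G)\le\kappa'(G)\le\overline{\kappa'}(G)\le k$ and $\tau(G)\le\overline{\tau}(G)\le\overline{\kappa'}(G)$ collapse all four invariants to $k$. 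The implication (v)$\Rightarrow$(iv) is a deletion of terms. For (iv)$\Rightarrow$(iii): $\tau(G)=k$ forces $\eta(G)\in[k,k+1)$ by (\ref{taueta}), while $\overline{\kappa'}(G)=k$ gives $\gamma(G)\le k$, so $\eta(G)\le d(G)\le\gamma(G)\le k$ by (\ref{etdga}), whence $\eta(G)=k$. For (iii)$\Rightarrow$(ii): Theorem~\ref{cat92} gives $d(G)=\gamma(G)=k$, so a connected $G$ has $|E(G)|=k(n-1)$; adding any edge $e$ makes $d(G+e)=k+\tfrac{1}{n-1}>k$, hence $\gamma(G+e)>k$, and by the contrapositive of the lemma $\overline{\kappa'}(G+e)\ge k+1$, so $G$ is $k$-maximal.

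The two structural arrows complete the cycle. For (vi)$\Rightarrow$(i) I would induct on the $k$-edge-join construction: the edge count $k(n-1)$ is additive under $*_k$ starting from $K_1$, and $\overline{\kappa'}(G)\le k$ follows because any subgraph meeting both sides of the final join has its crossing edges, a set of at most $k$ edges, as an edge cut, while subgraphs inside one side are controlled by induction; combined with $F(n,k)\le k(n-1)$ this both pins $F(n,k)=k(n-1)$ and places $G\in\mathcal{F}(n,k)$. For (ii)$\Rightarrow$(vi) I would induct on $n$: for connected $k$-maximal $G$ I would first prove $\kappa'(G)=k$, arguing that if a minimum cut had fewer than $k$ edges then adding one more (possibly parallel) crossing edge keeps every subgraph's edge connectivity at most $k$, contradicting maximality; then, fixing a $k$-edge cut $[V_1,V_2]$, I would show each side $G[V_i]$ is itself $k$-maximal, since any subgraph certifying $\overline{\kappa'}\ge k+1$ after inserting an edge inside $V_i$ cannot straddle the $k$-edge cut and so lives inside $V_i$; the inductive hypothesis then gives $G[V_1],G[V_2]\in\mathcal{G}_k\cup\{K_1\}$ and $G=G[V_1]*_k G[V_2]\in\mathcal{G}_k$.

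I expect the main obstacle to be exactly the step (ii)$\Rightarrow$(vi). Two points there require care: the multigraph argument that $\kappa'(G)=k$, where the addition of parallel crossing edges must be handled honestly, and the localization claim that a subgraph of edge connectivity $k+1$ cannot cross a $k$-edge cut, which is precisely what lets the induction descend into one side. A secondary but real subtlety is connectivity: condition (iii) by itself does not force $G$ to be connected, since a disjoint union of uniformly dense pieces of density exactly $k$ (for instance two copies of the two-vertex graph with $k$ parallel edges) satisfies $\eta(G)=\overline{\kappa'}(G)=k$ yet is neither $k$-maximal nor edge-maximum; I would therefore make the reduction to connected $G$ explicit and run the cycle under that hypothesis.
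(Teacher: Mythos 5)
Your proposal is correct, and while it reuses the paper's structural core, its overall organization is genuinely different. The paper first proves (i)$\Leftrightarrow$(ii) by showing every $k$-maximal graph decomposes recursively along a $k$-edge-cut (its Lemmas~\ref{kaplem} and~\ref{induc}) and therefore has exactly $k(n-1)$ edges, and only then derives the density facts from Corollary~\ref{catmax}; it also handles (v)$\Leftrightarrow$(vi) by two dedicated lemmas (\ref{overlem} and \ref{inverselem}) showing the four-invariant condition is preserved under splitting and joining along a $k$-cut. You instead anchor everything on one directly proved counting lemma --- $\overline{\kappa'}(G)\le k$ implies $|E(G)|\le k(|V(G)|-\omega(G))$, hence $\gamma(G)\le k$, by induction on a minimal cut of size at most $k$ --- and run a single cycle (ii)$\Rightarrow$(vi)$\Rightarrow$(i)$\Rightarrow$(v)$\Rightarrow$(iv)$\Rightarrow$(iii)$\Rightarrow$(ii), routing the (v)/(vi) equivalence through (i) by edge counting rather than through analogues of Lemmas~\ref{overlem} and~\ref{inverselem}. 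Your (ii)$\Rightarrow$(vi) step is essentially the paper's Lemmas~\ref{kaplem} and~\ref{induc} (the multigraph argument that $\kappa'(G)=k$ and the localization of a $(k+1)$-edge-connected subgraph away from a $k$-cut are exactly Lemma~\ref{etylem} at work), so that part is shared. What your route buys is a self-contained proof of $F(n,k)\le k(n-1)$ and of $\gamma(G)\le k$ that does not pass through $k$-maximality, plus an explicit treatment of the connectivity subtlety in (iii): your example of two disjoint copies of $kK_2$ does satisfy $\eta=\overline{\kappa'}=k$ without being $k$-maximal, and the paper's (iii)$\Rightarrow$(iv) step silently invokes $\tau=\lfloor\eta\rfloor$, which is stated only for connected graphs, so your restriction to connected $G$ is a legitimate repair rather than pedantry. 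Two small presentational points: in (iii)$\Rightarrow$(ii) you credit Theorem~\ref{cat92} with $d(G)=\gamma(G)=k$, but what you actually need is your counting lemma ($\gamma(G)\le k$) combined with the sandwich $k=\eta(G)\le d(G)\le\gamma(G)\le k$ from (\ref{etdga}); and in (i)$\Rightarrow$(v) you should note that $|E(G)|=k(n-1)$ together with your bound $|E(G)|\le k(n-\omega(G))$ forces $\omega(G)=1$, so connectivity there is automatic.
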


In order to prove Theorem~\ref{charact}, we need some lemmas.

\begin{lemma}
\label{etylem}
Let $X$ be a $k$-edge cut of a graph $G$. If $H$ is a subgraph of $G$ with $\kappa'(H)>k$,
then $E(H)\cap X=\emptyset$.
\end{lemma}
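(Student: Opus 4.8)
The plan is to argue by contradiction, using the structural description of a minimal edge cut. Recall that, by the convention fixed in the introduction, $X$ is a \emph{minimal} edge cut, so removing $X$ raises the component count of $G$ by exactly one. Consequently $X$ is cut entirely inside one component $C$ of $G$, and there is a partition of $V(C)$ into two nonempty parts $V_1$ and $V_2$ with the property that $X$ is precisely the set of edges of $G$ having one end in $V_1$ and the other in $V_2$; minimality is what guarantees both that $X$ contains no edge internal to $V_1$ or to $V_2$ and that \emph{every} $V_1$--$V_2$ edge of $G$ belongs to $X$. I would establish this description first, since the whole argument rests on it.

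Next, since $k \ge 1$ and $\kappa'(H) > k$, the subgraph $H$ is connected with at least two vertices, so $V(H)$ lies in a single component of $G$. If that component is not $C$, then $E(H) \cap X \subseteq E(C)$ forces $E(H) \cap X = \emptyset$ and we are done; so assume $V(H) \subseteq V(C)$. Suppose, for contradiction, that $E(H) \cap X \neq \emptyset$, and pick an edge $e = uv \in E(H) \cap X$ with $u \in V_1$ and $v \in V_2$. Because $H$ is a subgraph of $G$, every edge of $H$ joining $V_1$ to $V_2$ lies in $X$, hence lies in $E(H) \cap X$. Therefore any $u$--$v$ path in $H$ must traverse an edge of $E(H) \cap X$, so deleting $E(H) \cap X$ from $H$ separates $u$ from $v$. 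This shows $E(H) \cap X$ is an edge cut of $H$, whence $\kappa'(H) \le |E(H) \cap X| \le |X| = k$, contradicting $\kappa'(H) > k$. Thus $E(H) \cap X = \emptyset$, as claimed.

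I expect the only genuine subtlety to be the first step: pinning down that a minimal edge cut of size $k$ is exactly the set of $V_1$--$V_2$ edges of a vertex partition, rather than an arbitrary set of $k$ edges whose deletion disconnects $G$. Once the two-sides description is in hand, the contradiction is immediate, because a subgraph cannot acquire more crossing edges than the cut provides. An alternative route, avoiding the explicit partition, would be to observe directly that the edges of $H$ lying in $X$ form a cut of $H$ by a connectivity argument; but the partition picture is cleaner and makes the bound $|E(H) \cap X| \le k$ transparent.
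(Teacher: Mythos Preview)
Your proof is correct and follows essentially the same approach as the paper: assume $E(H)\cap X\neq\emptyset$, observe that $E(H)\cap X$ is then an edge cut of $H$, and derive the contradiction $\kappa'(H)\le |E(H)\cap X|\le |X|=k<\kappa'(H)$. The paper compresses this into a single line, leaving the partition/two-sides justification implicit, whereas you spell it out carefully; but the underlying argument is identical.
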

\begin{proof}[{\bf Proof:}]
If $E(H)\cap X \neq\emptyset$, then $\kappa'(H)\le |E(H)\cap X|\le |X|=k< \kappa'(H)$,
a contradiction.
\end{proof}

\begin{lemma}
\label{kaplem}
If a graph $G$ is $k$-maximal, then $\kappa'(G)=\overline{\kappa'}(G)=k$.
\end{lemma}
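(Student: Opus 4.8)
The plan is to pin down both equalities by combining the definitional inequality with the consequences of $k$-maximality. Since $G$ is a subgraph of itself, the definition of $\overline{\kappa'}$ together with the $k$-maximality hypothesis immediately gives $\kappa'(G)\le \overline{\kappa'}(G)\le k$, so I only need the two reverse bounds $\overline{\kappa'}(G)\ge k$ and $\kappa'(G)\ge k$. The common tool is that, for any edge $e\notin E(G)$ (such an edge exists whenever $G$ has at least two vertices, the degenerate case being immediate), the hypothesis yields a subgraph $H\subseteq G+e$ with $\kappa'(H)\ge k+1$; and since $\overline{\kappa'}(G)\le k<k+1$, such an $H$ cannot lie inside $G$, so necessarily $e\in E(H)$.

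First I would prove $\overline{\kappa'}(G)\ge k$. Fixing any missing edge $e$ and taking $H$ as above with $e\in E(H)$, the subgraph $H-e$ lies in $G$, and deleting a single edge drops edge connectivity by at most one, so $\kappa'(H-e)\ge \kappa'(H)-1\ge k$. Thus $\overline{\kappa'}(G)\ge k$, and with the upper bound this already gives $\overline{\kappa'}(G)=k$.

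The substantive step is $\kappa'(G)\ge k$, which I would argue by contradiction. If $\kappa'(G)\le k-1$, choose an edge cut $[S,V(G)\setminus S]$ of size at most $k-1$ with both sides nonempty, and add a new edge $e$ joining the two sides. The resulting $H\subseteq G+e$ with $\kappa'(H)\ge k+1$ again uses $e$, so $V(H)$ meets both $S$ and $V(G)\setminus S$. Every edge of $H$ crossing this partition belongs to $[S,V(G)\setminus S]\cup\{e\}$, hence there are at most $k$ of them; since both sides of the partition meet $V(H)$, these crossing edges form an edge cut of $H$ of size at most $k$, contradicting $\kappa'(H)\ge k+1$. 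Therefore $\kappa'(G)\ge k$, and combined with the upper bound we conclude $\kappa'(G)=\overline{\kappa'}(G)=k$.

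I expect the main obstacle to be precisely this second step: the $k$-maximality hypothesis naturally produces only \emph{some} dense subgraph, so turning ``adding an edge across a sparse global cut creates a $(k+1)$-edge-connected subgraph'' into a contradiction requires the observation that the cut such a subgraph inherits from the global cut can only have grown by one edge, leaving it still too small to support connectivity $k+1$. The first reverse bound and the definitional inequalities are routine by comparison.
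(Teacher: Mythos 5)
Your proof is correct and follows essentially the same route as the paper: the key step, deriving a contradiction from a global cut of size at most $k-1$ by noting that the $(k+1)$-edge-connected subgraph $H$ of $G+e$ must use $e$ yet inherits a cut of size at most $k$, is exactly the paper's argument (packaged there as a separate small lemma). Your separate verification that $\overline{\kappa'}(G)\ge k$ via $\kappa'(H-e)\ge\kappa'(H)-1$ is sound but redundant, since $\kappa'(G)=k$ together with $\kappa'(G)\le\overline{\kappa'}(G)\le k$ already forces $\overline{\kappa'}(G)=k$.
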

\begin{proof}[{\bf Proof:}]
Since $G$ is $k$-maximal, $\kappa'(G)\le \overline{\kappa'}(G)\le k$.
It suffices to show that $\kappa'(G)=k$. We assume that $\kappa'(G)<k$
and prove it by contradiction. Let $X$ be an edge cut with $|X|<k$
and suppose that $G=G_1 *_X G_2$. Let $e\not\in E(G)$ be an edge with one end in $V(G_1)$
and the other end in $V(G_2)$. By the definition of $k$-maximal graphs, 
$\overline{\kappa'}(G+e)\ge k+1$. Thus $G+e$ has a subgraph $H$ with $\kappa'(H)\ge k+1$.
Then it must be the case that $e\in E(H)$, otherwise $H$ is a subgraph of $G$, contrary to
$\overline{\kappa'}(G)\le k$. 
Since $X\cup \{e\}$ is an edge cut of $G+e$ with $|X\cup \{e\}|\le k$
and $H$ is a subgraph of $G+e$ with $\kappa'(H)\ge k+1$, by Lemma~\ref{etylem}, 
$E(H)\cap (X\cup \{e\})=\emptyset$, contrary to $e\in E(H)$.
\end{proof}

\begin{lemma}
\label{induc}
If a graph $G$ is $k$-maximal, then 
$G=G_1*_k G_2$ where either $G_i=K_1$ or $G_i$ is $k$-maximal for $i=1,2$.
\end{lemma}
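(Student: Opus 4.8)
The plan is to produce the decomposition from a minimum edge cut of $G$ and then verify that each side inherits $k$-maximality by transporting into it the extra edge connectivity that $k$-maximality of $G$ creates when we add an edge.

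First I would invoke Lemma~\ref{kaplem} to obtain $\kappa'(G)=\overline{\kappa'}(G)=k$. In particular $G$ is connected and possesses a minimum edge cut $X$ with $|X|=k$; since all edge cuts here are minimal, $G-X$ has exactly two components $G_1$ and $G_2$, and $X$ is precisely the set of edges joining them. By definition this yields $G=G_1*_X G_2 = G_1*_k G_2$ with each $G_i$ connected. As each $G_i$ is a subgraph of $G$, we immediately get $\overline{\kappa'}(G_i)\le \overline{\kappa'}(G)\le k$.

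Next I fix an index $i$, assume $G_i\neq K_1$, and show $G_i$ is $k$-maximal. The bound $\overline{\kappa'}(G_i)\le k$ is already in hand, so only the extremality condition remains. Take any edge $e\notin E(G_i)$ with both endpoints in $V(G_i)$. Because every edge of $G$ lying inside $V(G_i)$ already belongs to $G_i$, we have $e\notin E(G)$, so $k$-maximality of $G$ gives $\overline{\kappa'}(G+e)\ge k+1$; let $H\subseteq G+e$ be a subgraph with $\kappa'(H)\ge k+1$.

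The crux is to confine $H$ to the side $G_i+e$. Since $e$ has both ends in $V(G_i)$, the set $X$ remains an edge cut of $G+e$ separating $V(G_1)$ from $V(G_2)$, and $|X|=k<\kappa'(H)$. Lemma~\ref{etylem} then forces $E(H)\cap X=\emptyset$, so (as $H$ has positive edge connectivity and is therefore connected) all of $H$ lies on one side of the cut. It cannot lie on the side avoiding $e$, for then $H$ would be a subgraph of $G$ with $\kappa'(H)\ge k+1$, contradicting $\overline{\kappa'}(G)\le k$. Hence $H\subseteq G_i+e$, so $\overline{\kappa'}(G_i+e)\ge k+1$, and $G_i$ is $k$-maximal. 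I expect the localization of $H$ via Lemma~\ref{etylem} — in particular the observation that adding $e$ inside $V(G_i)$ keeps $X$ a $k$-edge-cut of $G+e$ — to be the one genuinely load-bearing step, with the rest being bookkeeping about subgraphs and components.
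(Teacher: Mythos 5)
Your proof is correct and follows essentially the same route as the paper's: obtain a $k$-edge-cut via Lemma~\ref{kaplem}, and for each nontrivial side use $k$-maximality of $G$ together with Lemma~\ref{etylem} to confine a $(k+1)$-edge-connected subgraph of $G+e$ to $G_i+e$. You are in fact slightly more careful than the paper in noting explicitly that an edge $e\notin E(G_i)$ with both ends in $V(G_i)$ is automatically not in $E(G)$, which is needed to invoke the $k$-maximality of $G$.
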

\begin{proof}[{\bf Proof:}]
By Lemma~\ref{kaplem}, $G$ has a $k$-edge cut $X$, and so $G=G_1 *_k G_2$.
For $i=1,2$, suppose that $G_i\neq K_1$, we want to prove that $G_i$ is $k$-maximal.
Since $G$ is $k$-maximal, $\overline{\kappa'}(G)\le k$, whence $\overline{\kappa'}(G_i)\le k$.
For any edge $e\not\in E(G_i)$, $\overline{\kappa'}(G+e)\ge k+1$.
Thus $G+e$ has a subgraph $H$ with $\kappa'(H)\ge k+1$. Since $\overline{\kappa'}(G)\le k$,
$H$ is not a subgraph of $G$, and so $e\in E(H)$. Since $X$ is a $k$-edge cut of
$G+e$, by Lemma~\ref{etylem}, $E(H)\cap X=\emptyset$. Hence $H$ is a subgraph 
of $G_i+e$ with $\kappa'(H)\ge k+1$, whence $\overline{\kappa'}(G_i)\ge k+1$.
Thus $G_i$ is $k$-maximal.
\end{proof}

\begin{lemma}
\label{equiva}
Let $G$ be a graph on $n$ vertices. 
Then $G\in {\cal F}(n,k)$ if and only if $G$ is $k$-maximal.
\end{lemma}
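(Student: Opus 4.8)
The plan is to prove the two implications separately, using the structural decomposition of Lemma~\ref{induc} to pin down the exact edge count of a $k$-maximal graph and then to identify $F(n,k)$ with that count. I would first dispose of the easy direction. Suppose $G\in {\cal F}(n,k)$. Then $\overline{\kappa'}(G)\le k$ holds by the definition of ${\cal F}(n,k)$, so only the maximality condition remains to be checked. For any edge $e\not\in E(G)$, the graph $G+e$ has $n$ vertices and $F(n,k)+1$ edges; since $F(n,k)$ is by definition the maximum number of edges among $n$-vertex graphs with $\overline{\kappa'}\le k$, we cannot have $\overline{\kappa'}(G+e)\le k$, whence $\overline{\kappa'}(G+e)\ge k+1$ and $G$ is $k$-maximal.

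Next I would establish the key quantitative fact that every $k$-maximal graph on $n$ vertices has exactly $k(n-1)$ edges, by induction on $n$. The base case $n=1$ is trivial, with $|E(K_1)|=0=k(1-1)$. For the inductive step, Lemma~\ref{induc} gives $G=G_1*_k G_2$, where each $G_i$ is either $K_1$ or a $k$-maximal graph on $n_i<n$ vertices, and $n_1+n_2=n$. The inductive hypothesis (together with the convention for $K_1$) yields $|E(G_i)|=k(n_i-1)$, and since the $K$-edge-join contributes exactly $k$ additional edges,
\[
|E(G)|=k(n_1-1)+k(n_2-1)+k=k(n_1+n_2-2)+k=k(n-1).
\]

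Finally I would combine these observations. Because $\overline{\kappa'}(G)\le k$ forces at most $k$ parallel edges between any pair of vertices (two vertices joined by $m$ parallel edges form a subgraph of edge connectivity $m$), the number of edges of any admissible graph is bounded by $k\binom{n}{2}$; hence $F(n,k)$ is finite and attained, so ${\cal F}(n,k)\neq\emptyset$. Any $H\in {\cal F}(n,k)$ is $k$-maximal by the first step, so $F(n,k)=|E(H)|=k(n-1)$ by the second step. Now for an arbitrary $k$-maximal graph $G$ we have $\overline{\kappa'}(G)\le k$ and $|E(G)|=k(n-1)=F(n,k)$, so $G\in {\cal F}(n,k)$, which completes the equivalence. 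I expect the inductive edge count of the second step to be the crux on which the whole argument rests, but Lemma~\ref{induc} already supplies the decomposition $G=G_1*_k G_2$, so the remaining difficulty is only careful bookkeeping and correct treatment of the degenerate factor $K_1$.
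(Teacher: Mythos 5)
Your proof is correct and follows essentially the same route as the paper: the easy direction from the definition of $F(n,k)$, then an induction on $n$ via the decomposition $G=G_1*_k G_2$ of Lemma~\ref{induc} to show every $k$-maximal graph has exactly $k(n-1)$ edges. Your extra step verifying that $F(n,k)$ is finite and attained (via the bound of $k$ on parallel edge multiplicities) and then identifying $F(n,k)=k(n-1)$ through a member of ${\cal F}(n,k)$ is a welcome piece of bookkeeping that the paper's own proof leaves implicit.
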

\begin{proof}[{\bf Proof:}]
By the definition of ${\cal F}(n,k)$, if $G\in {\cal F}(n,k)$, then $|E(G)|=F(n,k)$
and $\overline{\kappa'}(G)\le k$.
Then for any edge $e\not\in E(G)$, $|E(G+e)|=|E(G)|+1>F(n,k)$, and so 
$\overline{\kappa'}(G+e)\ge k+1$. By the definition of $k$-maximal graphs,
$G$ is $k$-maximal.

Now we assume that $G$ is $k$-maximal to prove that $G\in {\cal F}(n,k)$.
It suffices to show that any $k$-maximal graph $G$ has the property $\overline{\kappa'}(G)\le k$
with the maximum number of edges. We will prove that for any $k$-maximal graph $G$,
$|E(G)|=F(n,k)=k(n-1)$. We use induction on $n$.
When $n=2$, $G$ is $kK_2$, which is the graph with $2$ vertices and $k$ multiple edges,
and so $|E(G)|=k$. We assume that $|E(G)|=F(n,k)=k(n-1)$ holds for smaller values of $n>2$.
By Lemma~\ref{induc}, $G=G_1 *_k G_2$ where $G_i$ is $k$-maximal or $k_1$ for $i=1,2$.
Let $|V(G_i)|=n_i$. By inductive hypothesis, $|E(G_i)|=k(n_i-1)$. Thus 
$|E(G)|=k(n_1-1)+k(n_2-1)+k=k(n-1)$.
\end{proof}

\begin{corollary}
\label{catmax}
$F(n,k)=k(n-1)$.
\end{corollary}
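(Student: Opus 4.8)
The plan is to read the corollary off directly from Lemma~\ref{equiva} together with the edge count already carried out in its proof. By the very definition of $F(n,k)$ there exists at least one graph $G$ on $n$ vertices with $\overline{\kappa'}(G)\le k$ and $|E(G)|=F(n,k)$; that is, ${\cal F}(n,k)\neq\emptyset$. First I would invoke Lemma~\ref{equiva} to conclude that every such extremal graph $G\in {\cal F}(n,k)$ is in fact $k$-maximal.

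Next I would pin down the number of edges of a $k$-maximal graph. The cleanest route is the structural decomposition of Lemma~\ref{induc}, which expresses any $k$-maximal $G$ as a $k$-edge-join $G=G_1*_k G_2$ with each $G_i$ either $K_1$ or itself $k$-maximal, followed by induction on $n=|V(G)|$. For the base case $n=2$ the only $k$-maximal graph is $kK_2$, which has exactly $k=k(n-1)$ edges. For the inductive step, writing $|V(G_i)|=n_i$ so that $n_1+n_2=n$, the inductive hypothesis gives $|E(G_i)|=k(n_i-1)$ (with the convention that a $K_1$ summand contributes $0=k(1-1)$ edges), and since the join adds exactly $k$ new edges,
\[
|E(G)|=k(n_1-1)+k(n_2-1)+k=k(n_1+n_2-1)=k(n-1).
\]
This is precisely the computation performed inside the proof of Lemma~\ref{equiva}, so one may simply cite it rather than repeat it.

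Combining the two steps, every $G\in {\cal F}(n,k)$ satisfies $|E(G)|=k(n-1)$, and since $|E(G)|=F(n,k)$ for such $G$ by definition, we obtain $F(n,k)=k(n-1)$, as claimed.

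I expect no genuine obstacle here: the substantive work—showing that $k$-maximality forces the $k$-edge-join structure and that this structure fixes the edge count—has already been completed in Lemmas~\ref{kaplem}, \ref{induc}, and \ref{equiva}. The only points requiring minor care are that in the decomposition $G=G_1*_k G_2$ each piece $G_i\neq K_1$ has strictly fewer than $n$ vertices, so that the induction is well-founded, and that a $K_1$ summand is correctly counted as contributing no edges.
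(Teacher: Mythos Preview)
Your proposal is correct and follows exactly the paper's approach: the corollary is not given a separate proof in the paper because the identity $|E(G)|=F(n,k)=k(n-1)$ for $k$-maximal $G$ is established verbatim inside the proof of Lemma~\ref{equiva} via Lemma~\ref{induc} and the same induction you describe. Your remarks about the base case, the $K_1$ summand, and the well-foundedness of the induction are all accurate and match the paper's computation.
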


\begin{lemma}
\label{overlem}
Suppose $\tau(G)=\overline{\tau}(G)=\kappa'(G)=\overline{\kappa'}(G)=k$.
Then $G=G_1*_k G_2$ where either $G_i=K_1$ or $G_i$ satisfies
$\tau(G_i)=\overline{\tau}(G_i)=\kappa'(G_i)=\overline{\kappa'}(G_i)=k$ for $i=1,2$.
\end{lemma}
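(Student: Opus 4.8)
The plan is to produce the decomposition from the edge-connectivity hypothesis and then verify the four equalities on each nontrivial side using the spanning-tree hypothesis. Since $\kappa'(G)=k$, there is an edge cut $X$ with $|X|=k$; because, by the paper's convention, every edge cut is minimal, $G-X$ has exactly two components $G_1$ and $G_2$, both connected, so $G=G_1*_X G_2 = G_1*_k G_2$. This is the required decomposition, and it then remains to show that each nontrivial $G_i$ satisfies $\tau(G_i)=\overline{\tau}(G_i)=\kappa'(G_i)=\overline{\kappa'}(G_i)=k$.

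First I would record the easy upper bounds. Since $G_i$ is a subgraph of $G$, every subgraph of $G_i$ is also a subgraph of $G$, so $\overline{\kappa'}(G_i)\le\overline{\kappa'}(G)=k$ and $\overline{\tau}(G_i)\le\overline{\tau}(G)=k$; together with the standard chains $\tau(G_i)\le\kappa'(G_i)\le\overline{\kappa'}(G_i)$ and $\tau(G_i)\le\overline{\tau}(G_i)$, this shows all four invariants of $G_i$ are at most $k$. Consequently it suffices to prove the single lower bound $\tau(G_i)\ge k$: this forces $\tau(G_i)=\overline{\tau}(G_i)=k$ and, via $\kappa'(G_i)\ge\tau(G_i)$, also $\kappa'(G_i)=\overline{\kappa'}(G_i)=k$.

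The lower bound $\tau(G_i)\ge k$ is the heart of the argument, and it is exactly here that the hypothesis $\tau(G)=\kappa'(G)=k$ (both invariants equal to the same $k$) is used. Since $\tau(G)=k$, fix $k$ edge-disjoint spanning trees $T_1,\dots,T_k$ of $G$. Each $T_j$ is connected and spanning, so it crosses the cut $X$ at least once; since the $T_j$ are edge-disjoint and $|X|=k$, a pigeonhole count forces each $T_j$ to use exactly one edge of $X$. Deleting that single crossing edge splits $T_j$ into two subtrees, one spanning $V(G_1)$ and one spanning $V(G_2)$, because no other edge of $T_j$ crosses $X$; hence $T_j\cap E(G_i)$ is a spanning tree of $G_i$. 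As the $T_j$ are edge-disjoint, the $k$ restrictions $T_1\cap E(G_i),\dots,T_k\cap E(G_i)$ are $k$ edge-disjoint spanning trees of $G_i$, giving $\tau(G_i)\ge k$.

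The main obstacle, and really the only nonroutine point, is justifying that each spanning tree meets $X$ in exactly one edge, which is what makes the restriction to $G_i$ a genuine spanning tree rather than a spanning forest. This is precisely where $\tau(G)$ and $\kappa'(G)$ being equal is essential: if $\tau(G)$ were strictly smaller than $\kappa'(G)=k$, the tight pigeonhole would fail and a tree could cross $X$ several times, fragmenting on $G_i$. The remaining bookkeeping, namely that $G_1$ and $G_2$ are connected and that $K_1$ is the only degenerate alternative, is immediate.
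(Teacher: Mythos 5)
Your proof is correct and follows essentially the same route as the paper: extract a $k$-edge-cut from $\kappa'(G)=k$ to get the decomposition, bound everything above by the subgraph monotonicity of $\overline{\tau}$ and $\overline{\kappa'}$, and reduce to the lower bound $\tau(G_i)\ge k$. The only difference is that the paper simply asserts ``since $G$ has $k$ disjoint spanning trees, $\tau(G_i)\ge k$,'' whereas you supply the justification (the pigeonhole argument showing each tree meets $X$ exactly once and hence restricts to a spanning tree of each side) --- a detail worth having, since the assertion genuinely depends on $|X|=k=\tau(G)$.
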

\begin{proof}[{\bf Proof:}]
Since $\kappa'(G)=k$, there must be an edge-cut of size $k$. Hence
there exist graphs $G_1$ and $G_2$ such that $G=G_1*_k G_2$.
If $G_i\neq K_1$, we will prove 
$\tau(G_i)=\overline{\tau}(G_i)=\kappa'(G_i)=\overline{\kappa'}(G_i)=k$, for $i=1,2$.
First, by the definition of $\overline{\tau}$, 
$\tau(G_i)\le \overline{\tau}(G_i)\le \overline{\tau}(G)=k$ for $i=1,2$.
Since $G$ has $k$ disjoint spanning trees, we have $\tau(G_i)\ge k$ for $i=1,2$.
Thus $\tau(G_i)=\overline{\tau}(G_i)=k$ for $i=1,2$.
Now we prove $\kappa'(G_i)=\overline{\kappa'}(G_i)=k$ for $i=1,2$. 
Since $\overline{\kappa'}(G)=k$, $\kappa'(G_i)\le \overline{\kappa'}(G_i)\le k$. 
But $\kappa'(G_i)\ge \tau(G_i)=k$ for $i=1,2$. Hence we have
$\tau(G_i)=\overline{\tau}(G_i)=\kappa'(G_i)=\overline{\kappa'}(G_i)=k$ for $i=1,2$.
\end{proof}

\begin{lemma} 
\label{inverselem}
Let $G=G_1*_k G_2$ where $G_i=K_1$ or $G_i$ satisfies
$\tau(G_i)=\overline{\tau}(G_i)=\kappa'(G_i)=\overline{\kappa'}(G_i)=k$ for $i=1,2$.
Then $\tau(G)=\overline{\tau}(G)=\kappa'(G)=\overline{\kappa'}(G)=k$.
\end{lemma}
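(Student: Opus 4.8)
The plan is to establish all four equalities simultaneously by squeezing every invariant between $k$ and $k$. First I would record the universally valid chain of inequalities for the connected graph $G$: since $G$ is a subgraph of itself, and since $\tau(H)\le\kappa'(H)$ for every connected subgraph $H$ (so that taking the maximum over $H$ gives $\overline{\tau}(G)\le\overline{\kappa'}(G)$), we have
\[
\tau(G)\le\kappa'(G)\le\overline{\kappa'}(G)
\quad\mbox{and}\quad
\tau(G)\le\overline{\tau}(G)\le\overline{\kappa'}(G).
\]
Hence it suffices to prove the single lower bound $\tau(G)\ge k$ together with the single upper bound $\overline{\kappa'}(G)\le k$; these two bounds force all four invariants to equal $k$.

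For the lower bound $\tau(G)\ge k$, I would construct $k$ edge-disjoint spanning trees of $G$ explicitly. Write $K=\{e_1,\dots,e_k\}$ for the $k$ join edges. For each $i\in\{1,2\}$, if $G_i\neq K_1$ then $\tau(G_i)=k$ supplies $k$ edge-disjoint spanning trees $T_i^{(1)},\dots,T_i^{(k)}$ of $G_i$, while if $G_i=K_1$ I take each $T_i^{(j)}$ to be the trivial one-vertex tree using no edges. Then for each $j$ the subgraph $T^{(j)}=T_1^{(j)}\cup T_2^{(j)}\cup\{e_j\}$ spans $V(G)=V(G_1)\cup V(G_2)$ and is a tree (it joins two trees by a single edge, giving exactly $n-1$ edges with no cycle), and the $T^{(j)}$ are pairwise edge-disjoint because the trees within each $G_i$ are edge-disjoint and the $e_j$ are distinct. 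This yields $\tau(G)\ge k$.

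For the upper bound $\overline{\kappa'}(G)\le k$, I would argue by contradiction. Suppose some subgraph $H$ of $G$ satisfies $\kappa'(H)\ge k+1$. The join set $X=K$ is a $k$-edge-cut of $G$, so Lemma~\ref{etylem} gives $E(H)\cap K=\emptyset$. Since $\kappa'(H)\ge k+1\ge1$ forces $H$ to be connected with no isolated vertex, and the only edges of $G$ between $V(G_1)$ and $V(G_2)$ lie in $K$, all of $H$ must live inside a single $G_i$; in particular $G_i\neq K_1$ and $H$ is a subgraph of $G_i$. But then $\overline{\kappa'}(G_i)\ge\kappa'(H)\ge k+1$, contradicting the hypothesis $\overline{\kappa'}(G_i)=k$.

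I expect the construction of the $k$ edge-disjoint spanning trees to be the only place needing care, chiefly the bookkeeping that lets the two degenerate cases ($G_1=K_1$ or $G_2=K_1$) be absorbed into the same argument; the upper bound is then immediate from Lemma~\ref{etylem}. Combining $k\le\tau(G)$ with $\overline{\kappa'}(G)\le k$ and the displayed chain collapses everything to $\tau(G)=\overline{\tau}(G)=\kappa'(G)=\overline{\kappa'}(G)=k$, completing the proof.
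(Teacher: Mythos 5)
Your proposal is correct and follows essentially the same route as the paper: the lower bound comes from gluing the $k$ edge-disjoint spanning trees of $G_1$ and $G_2$ pairwise via the $k$ join edges, and the upper bound comes from observing that any subgraph $H$ with $\kappa'(H)>k$ must avoid the $k$-edge-cut $K$ (Lemma~\ref{etylem}) and hence sit inside one $G_i$. The only cosmetic difference is that the paper bounds $\kappa'(H)$ directly by noting $E(H)\cap K$ is an edge cut of $H$ when nonempty, whereas you phrase it as a contradiction via Lemma~\ref{etylem}; these are the same argument.
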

\begin{proof}[{\bf Proof:}]
Since $G=G_1*_k G_2$ and $\kappa'(G_1)=\kappa'(G_2)=k$, we have 
$\tau(G)\le \kappa'(G)=k$ and there exists an edge-cut $X=\{x_1,x_2,\cdots,x_k\}$
such that $G=G_1*_X G_2$. Let 
$T_{1,i}, T_{2,i},\cdots, T_{k,i}$ be edge-disjoint spanning trees of
$G_i$, for $i=1,2$.
Then $T_{1,1}+x_1+T_{1,2}, T_{2,1}+x_2+T_{2,2},\cdots,
T_{k,1}+x_k+T_{k,2}$ are $k$ edge-disjoint spanning trees of $G$. Thus
$\tau(G)=\kappa'(G)=k$. Now we need to prove that for any subgraph $H$ of $G$,
$\tau(H)\le k$ and $\kappa'(H)\le k$.
If $E(H)\cap X \neq\emptyset$, then $E(H)\cap X$ is an edge cut of $H$ and thus
$\tau(H)\le \kappa'(H)\le k$. If $E(H)\cap X =\emptyset$, then $H$ is a spanning
subgraph of either $G_1$ or $G_2$, whence $\tau(H)\le \kappa'(H)\le k$.
\end{proof}

Now we present the proof of Theorem~\ref{charact}.
\begin{proof}[{\bf Proof of Theorem~\ref{charact}:}]
By Lemma~\ref{equiva}, (i) and (ii) are equivalent. By (\ref{taueta}), (iii)$\Rightarrow$(iv).\\
(i)$\Rightarrow$(iii): By Corollary~\ref{catmax}, $|E(G)|=k(n-1)$. By the definition of $d(G)$,
$d(G)=k$. Since $\overline{\kappa'}(G)\le k$, for any subgraph $H$ of $G$, $\overline{\kappa'}(H)\le k$.
By Corollary~\ref{catmax}, $|E(H)|\le k(|V(H)|-1)$, whence $d(H)\le k$. By the definition of $\gamma(G)$,
we have $\gamma(G)\le k$. Thus $d(G)=\gamma(G)=k$. By Theorem~\ref{cat92}, $\eta(G)=k$.
Hence $k=\eta(G)=\tau(G)\le \overline{\kappa'}(G)\le k$, i.e., $\eta(G)=\overline{\kappa'}(G)=k$.\\
(iv)$\Rightarrow$(i): Since $\overline{\kappa'}(G)= k$, by Corollary~\ref{catmax}, $|E(G)|\le k(n-1)$. 
Since $\tau(G)=k$, $G$ has $k$ edge-disjoint spanning trees, and so $|E(G)|\ge k(n-1)$. 
Thus $|E(G)|=k(n-1)$, and so $G\in {\cal F}(n,k)$.\\
(iv)$\Leftrightarrow$(v): By definition, $\tau(G)\le \overline{\tau}(G)\le \overline{\kappa'}(G)$
and $\tau(G)\le\kappa'(G)\le \overline{\kappa'}(G)$. The equivalence between (iv) and (v) now follows
from these inequalities.\\
(v)$\Rightarrow$(vi): We argue by induction on $|V(G)|$. When $|V(G)|=2$, a graph $G$
with $\tau(G)=\overline{\tau}(G)=\kappa'(G)=\overline{\kappa'}(G)=k$ must be
$K_1*_k K_1$, and so by definition, $G\in {\cal G}_k$. 
We assume that (v)$\Rightarrow$(vi) holds for smaller values of $|V(G)|$.
By Lemma~\ref{overlem}, $G=G_1*_k G_2$ with
$\tau(G_i)=\overline{\tau}(G_i)=\kappa'(G_i)=\overline{\kappa'}(G_i)=k$
or $G_i=K_1$, for $i=1,2$. If $G_i\neq K_1$, then by the inductive hypothesis,
$G_i\in {\cal G}_k$. By definition, $G\in {\cal G}_k$.\\
(vi)$\Rightarrow$(v): We show it by induction on $|V(G)|$. 
When $|V(G)|=2$, by the definition of ${\cal G}_k$, $G=K_1*_k K_1$, and then
$\tau(G)=\overline{\tau}(G)=\kappa'(G)=\overline{\kappa'}(G)=k$.
We assume that it holds for smaller values of $|V(G)|$. By the definition of ${\cal G}_k$,
$G=G_1*_k K_1$ or $G=G_1*_k G_2$ where $G_1,G_2\in {\cal G}_k$. By inductive hypothesis, 
$\tau(G_i)=\overline{\tau}(G_i)=\kappa'(G_i)=\overline{\kappa'}(G_i)=k$ for $i=1,2$,
and by Lemma~\ref{inverselem}, $\tau(G)=\overline{\tau}(G)=\kappa'(G)=\overline{\kappa'}(G)=k$.
\end{proof}


\section{Characterizations of minimal graphs with $\kappa'=\tau$}

We define
\[
 \F_{k,n} = \{G: \kappa'(G) = \tau(G)=k, |V(G)|=n \mbox{ and
$|E(G)|$ is minimized} \}
\]
and $\F_{k}=\cup_{n > 1} \F_{k,n}$.

In this section, we will give characterizations of graphs in $\F_{k}$.
In addition, we use $\F_{k,n}$ to 
characterize graphs $G$ with $\kappa'(G) = \tau(G)$.

\begin{theorem}
\label{graphthm}
Let $G$ be a graph, then $G \in \F_{k}$ 
if and only if $G$ satisfies
\\(i) $G$ has an edge-cut of size $k$, and
\\(ii) $G$ is uniformly dense with density $k$.
\end{theorem}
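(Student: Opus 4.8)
The plan is to decode the two defining requirements of $\F_{k,n}$ — namely $\kappa'(G)=k$ and $\tau(G)=k$ with $|E(G)|$ minimized — and match them to conditions (i) and (ii), using Theorem~\ref{cat92} (Catlin et al.) together with the identity (\ref{taueta}) as the main engine. The guiding observation is that, for a connected graph $G$ on $n$ vertices, condition (ii) is exactly the combined assertion ``$\tau(G)=k$ and $|E(G)|$ is minimum,'' while condition (i) is exactly ``$\kappa'(G)=k$.''

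First I would pin down the extremal edge count. Since $\tau(G)=k$ means $G$ contains $k$ edge-disjoint spanning trees, any such $G$ has $|E(G)|\ge k(n-1)$, and this bound is realized inside the class $\{\kappa'(G)=\tau(G)=k\}$: the multigraph obtained from a path $v_1v_2\cdots v_n$ by replacing each edge with $k$ parallel copies has $\kappa'=\tau=k$ and exactly $k(n-1)$ edges. Hence the minimum is $k(n-1)$, and $G\in\F_{k,n}$ precisely when $G$ is connected with $\kappa'(G)=\tau(G)=k$ and $|E(G)|=k(n-1)$. For the forward direction, given such a $G$: connectivity together with $|E(G)|=k(n-1)$ yields $d(G)=k$ from (\ref{defdg}); from $\tau(G)=k$ and (\ref{taueta}) we get $k\le\eta(G)<k+1$, while (\ref{etdga}) gives $\eta(G)\le d(G)=k$, so $\eta(G)=d(G)=k$. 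Theorem~\ref{cat92} then forces $\gamma(G)=d(G)=k$, which is condition (ii); condition (i) is immediate, as $\kappa'(G)=k$ provides a minimal edge-cut of size $k$.

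Conversely, assuming (i) and (ii): the equality $d(G)=\gamma(G)=k$ and Theorem~\ref{cat92} give $\eta(G)=k$, whence (\ref{taueta}) yields $\tau(G)=\lfloor k\rfloor=k$ and, by connectivity, $|E(G)|=k(n-1)$. Finally $\kappa'(G)\ge\tau(G)=k$ by the established inequality, while the edge-cut of size $k$ from (i) gives $\kappa'(G)\le k$, so $\kappa'(G)=k$. Thus $G$ lies in $\{\kappa'=\tau=k\}$ with the minimum number $k(n-1)$ of edges, i.e.\ $G\in\F_k$.

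The step I expect to be the main obstacle is a connectivity subtlety in the converse: condition (ii) by itself does not rule out a \emph{disconnected} $G$ whose components each have density $k$ (for which $\tau$ is undefined), so the identity $\tau(G)=\lfloor\eta(G)\rfloor$ of (\ref{taueta}), stated only for connected graphs, cannot be applied until connectivity is secured. I must therefore use condition (i) — read as the existence of a minimal edge-cut of the whole graph, equivalently $\kappa'(G)\le k$ with $G$ connected — to establish connectivity before invoking (\ref{taueta}). A secondary point requiring care is confirming that the lower bound $k(n-1)$ is genuinely attained within the class $\{\kappa'=\tau=k\}$ rather than merely within $\{\tau=k\}$, which the explicit ``thick path'' construction settles.
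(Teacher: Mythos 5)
Your proof is correct, and it reaches the same conclusion by a somewhat different route. Both arguments run on the same engine --- Theorem~\ref{cat92} together with $\tau(G)=\lfloor\eta(G)\rfloor$ from (\ref{taueta}) --- to pass between $\eta$, $d$ and $\gamma$, and both obtain $\kappa'(G)=k$ in the converse from the size-$k$ edge-cut combined with $\kappa'(G)\ge\tau(G)$. Where you diverge is in translating ``$|E(G)|$ is minimized'' into ``$d(G)=k$'': the paper invokes Lemma~\ref{Lailem} ($E_k(G)=\emptyset$ if and only if $d(G)=k$) in both directions, whereas you compute the minimum outright as $k(n-1)$, from the spanning-tree lower bound together with the thick-path construction showing this value is attained within the class $\{\kappa'=\tau=k\}$. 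Your version is more elementary and self-contained, and it also quietly repairs a small looseness in the paper: $E_k(G)=\emptyset$ literally says only that no single edge can be deleted while keeping $\tau\ge k$ (local minimality), while the definition of $\F_{k,n}$ asks for the global minimum over the whole class; identifying the minimum with the absolute lower bound $k(n-1)$ makes the two coincide without further argument. Finally, you are right to flag the connectivity point before applying (\ref{taueta}) in the converse; the paper passes over it silently, and your reading of condition (i) as certifying connectivity is the intended one.
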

\begin{proof}[{\bf Proof:}]
Suppose that $G \in \F_{k}$, then $\tau(G)=\kappa'(G)=k$.
Hence $G$ has an edge-cut of size $k$. Since $|E(G)|$ is minimized, we have
$E_k(G)=\emptyset$. By Lemma~\ref{Lailem}, $d(G)=k$. Since $\tau(G)=k$,
by Theorem~\ref{NaTu} and the definition of $\eta(G)$, we have $\eta(G)\ge k$.
By (\ref{etdga}), $\eta(G)\le d(G)=k$, whence $\eta(G)=d(G)=k$.
By Theorem~\ref{cat92}, $G$ is uniformly dense with density $k$.

On the other hand, suppose that $G$ satisfies (i) and (ii). By (ii) and 
Theorem~\ref{cat92}, $\eta(G)=d(G)=k$. By (\ref{taueta}), $\tau(G)=k$.
Then $\kappa'(G)\ge \tau(G)=k$. But $G$ has an edge-cut of size $k$, thus
$\kappa'(G)=\tau(G)=k$. Since $d(G)=k$, by Lemma~\ref{Lailem}, $E_k(G)=\emptyset$,
i.e. $|E(G)|$ is minimized. Thus $G \in \F_{k}$.
\end{proof}

\begin{theorem}
\label{kconthm}
A graph $G \in \F_{k}$ if and only if $G=G_1*_k G_2$ 
where either  $G_i=K_1$ or $G_i$ is uniformly dense with density $k$ for $i=1,2$.
\end{theorem}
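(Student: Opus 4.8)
The plan is to reduce everything to Theorem~\ref{graphthm}, which already characterizes $\F_k$ by the two conditions that $G$ has an edge-cut of size $k$ and that $G$ is uniformly dense with density $k$. So the task is to show that these two conditions together are equivalent to admitting a decomposition $G=G_1*_k G_2$ of the stated form. Throughout I will use that, by the definition of the $K$-edge-join, the two factors $G_1,G_2$ are connected, and that by Theorem~\ref{cat92} the phrase ``uniformly dense with density $k$'' may be read interchangeably as $d=\gamma=k$.

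For the forward direction, suppose $G\in\F_k$. By Theorem~\ref{graphthm}, $\kappa'(G)=k$ and $G$ is uniformly dense with density $k$, so $d(G)=\gamma(G)=k$. Since $k\ge 1$, $G$ is connected, and a minimum edge-cut of $G$ has size $k$ and, being a bond, splits $G$ into two connected pieces; hence $G=G_1*_k G_2$ with $G_1,G_2$ connected. Writing $n_i=|V(G_i)|$, I will pin down each $G_i$ purely by edge-counting. As every subgraph of $G_i$ is a subgraph of $G$, we have $\gamma(G_i)\le\gamma(G)=k$, which for $n_i\ge 2$ gives $|E(G_i)|\le k(n_i-1)$ (for $n_i=1$ this holds trivially with $G_i=K_1$). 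On the other hand $|E(G)|=k(n-1)$ together with $|E(G)|=|E(G_1)|+|E(G_2)|+k$ forces $|E(G_1)|+|E(G_2)|=k(n_1-1)+k(n_2-1)$, so both upper bounds must be tight: $|E(G_i)|=k(n_i-1)$. For $G_i\neq K_1$ this gives $d(G_i)=k$, and combined with $d(G_i)\le\gamma(G_i)\le k$ via (\ref{etdga}) we obtain $d(G_i)=\gamma(G_i)=k$, i.e.\ $G_i$ is uniformly dense with density $k$ by Theorem~\ref{cat92}.

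For the converse, suppose $G=G_1*_k G_2$ with each $G_i$ equal to $K_1$ or uniformly dense with density $k$. The $k$ join edges $K$ form an edge-cut of size $k$ (a bond, since $G_1,G_2$ are connected), giving condition (i) of Theorem~\ref{graphthm}. It remains to verify uniform density. The count $|E(G_i)|=k(n_i-1)$ (valid for $K_1$ as well) yields $|E(G)|=k(n-1)$, so $d(G)=k$; the real work is to show $\gamma(G)\le k$, i.e.\ $d(H)\le k$ for every subgraph $H$. Given such an $H$, write $H_i=H\cap G_i$ and let $c=|E(H)\cap K|$ be its number of crossing edges. Since $\gamma(G_i)\le k$, each factor satisfies $|E(H_i)|\le k(|V(H_i)|-\omega(H_i))$. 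The point is to compare $\omega(H)$ with $\omega(H_1)+\omega(H_2)$: setting $r=\omega(H_1)+\omega(H_2)-\omega(H)\ge 0$, the crossing edges reduce the component count by exactly $r$, and the first crossing edge present always joins a component contained in $V(G_1)$ to a distinct one contained in $V(G_2)$, so $r\ge 1$ whenever $c\ge 1$. As $c\le|K|=k$, this yields $c\le kr$ in every case. Adding the two factor inequalities to $c\le kr$ collapses to exactly $|E(H)|\le k(|V(H)|-\omega(H))$, i.e.\ $d(H)\le k$. Hence $\gamma(G)=d(G)=k$, and by Theorem~\ref{graphthm}, $G\in\F_k$.

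The main obstacle is the bound $\gamma(G)\le k$ in the converse: the obvious estimate on component counts runs the wrong way, since $r\le c$ is automatic while what is needed is $c\le kr$. The proof hinges on the qualitative observation that any single crossing edge already forces a merge of a $G_1$-side component with a $G_2$-side component, so that $r\ge 1$ as soon as $c\ge 1$; once this is in place, the remaining step is routine arithmetic combined with the uniform-density bounds on $H_1$ and $H_2$.
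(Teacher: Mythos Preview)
Your proof is correct. The forward direction is essentially the paper's argument, just mirrored: the paper bounds $d(G_i)$ from below via $\tau(G_i)\ge\tau(G)=k$, while you bound it from above via $\gamma(G_i)\le\gamma(G)=k$; in both cases the edge count $|E(G)|=k(n-1)$ then pins $d(G_i)$ to $k$.

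The converse is where your argument genuinely diverges from the paper's. The paper verifies uniform density by establishing $\eta(G)=k$: from $\eta(G_i)=k$ it gets $\tau(G_i)=k$, then pairs up the $k$ spanning trees of $G_1$ and $G_2$ with the $k$ join edges to build $k$ edge-disjoint spanning trees of $G$, so $\tau(G)\ge k$ and hence $\eta(G)\ge k=d(G)$. You instead verify uniform density by establishing $\gamma(G)\le k$ through the direct subgraph estimate, the key step being the component-count inequality $c\le kr$. Your route avoids invoking the Nash--Williams/Tutte packing at this point and is in that sense more self-contained, at the cost of the small combinatorial lemma about crossing edges; the paper's route is shorter once one is willing to use $\tau=\lfloor\eta\rfloor$ and the tree-gluing construction already proved in Lemma~\ref{inverselem}.
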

\begin{proof}[{\bf Proof:}]
Suppose that $G \in \F_{k}$. By Theorem~\ref{graphthm},
$G$ has an edge-cut of size $k$, whence there exist graphs $G_1$ and $G_2$
such that $G=G_1*_k G_2$. Now we will prove that $G_i$ is uniformly dense with 
density $k$ if it is not isomorphic to $K_1$, for $i=1,2$. Since $\tau(G)=k$,
we have $\tau(G_i)\ge k$, and thus $d(G_i)\ge k$, for $i=1,2$. 
By (\ref{etdga}), (\ref{taueta}) and Theorem~\ref{cat92}, it suffices to prove
that $d(G_i)=k$ for $i=1,2$. If not, then either $d(G_1)>k$ or $d(G_2)>k$. By (\ref{defdg}),
$|E(G)|=|E(G_1)|+|E(G_2)|+k > k(|V(G_1)|-1)+k(|V(G_2)|-1)+k=k(|V(G)|-1)$,
and thus $d(G)=\frac{|E(G)|}{|V(G)|-1}>k$, contrary to the fact that $d(G)=k$.
Hence $d(G_i)=k$, and $k\le \tau(G_i)\le \eta(G_i)\le d(G_i)=k$.
By Theorem~\ref{cat92}, $G_i$ is uniformly dense with density $k$ for $i=1,2$. 
This proves the necessity.

To prove the sufficiency, first notice that $G$
must have an edge-cut of size $k$, by the definition of the $k$-edge-join.
In order to prove $G \in \F_{k}$,
by Theorem~\ref{graphthm}, it suffices to show that $G$ is uniformly dense with 
density $k$. Without loss of generality, 
we may assume that $G_i$ is not isomorphic to $K_1$ for $i=1,2$.
Then $\eta(G_i)=d(G_i)=k$ for $i=1,2$. By (\ref{taueta}), 
$\tau(G_i)=\lfloor\eta(G_i)\rfloor =k$.
Also we have $d(G_i)=\frac{|E(G_i)|}{|V(G_i)|-1}=k$ for $i=1,2$. Hence 
$E(G)=|E(G_1)|+|E(G_2)|+k=k(|V(G_1)|-1)+k(|V(G_2)|-1)+k=k(|V(G)|-1)$, whence
$d(G)=\frac{|E(G)|}{|V(G)|-1}=k$. Thus $k=\tau(G)\le \eta(G)\le d(G)=k$, i.e.,
$\eta(G)=d(G)=k$, and by Theorem~\ref{cat92}, $G$ is uniformly dense with density $k$. 
By Theorem~\ref{graphthm}, $G \in \F_{k}$.
\end{proof}

Theorem~\ref{kconthm} has the following corollary, presenting a recursive  structural 
characterization of graphs in $\F_{k}$.
\begin{corollary}
Let ${\cal K}(k)=\{G:\kappa'(G)>\eta(G)=d(G)=k\}$. 
Then a graph $G\in \F_{k}$ if and only if 
$G=((G_1 *_k G_2) *_k \cdots )*_k G_t$ for some integer $t\ge 2$ and $G_i\in {\cal K}(k)\cup \{K_1\}$
for $i=1,2,\cdots, t$.
\end{corollary}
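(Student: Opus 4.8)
The plan is to prove both implications by induction, using Theorem~\ref{kconthm} as the elementary decomposition step and Theorem~\ref{graphthm} to re-enter the induction. The entire content is the observation that when Theorem~\ref{kconthm} peels a factor off a $k$-edge-cut, each resulting nontrivial factor is uniformly dense with density $k$, and such a factor is either indecomposable, in the sense of having no $k$-edge-cut (equivalently $\kappa'>k$, which is exactly the defining condition of ${\cal K}(k)$), or it again lies in $\F_{k}$ and can be split further. These two alternatives produce, respectively, the terminal atoms and the recursive calls.

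For sufficiency I would induct on the number $t$ of factors. When $t=2$ the graph is $G_1 *_k G_2$ with each $G_i$ either $K_1$ or, being a member of ${\cal K}(k)$, satisfying $\eta(G_i)=d(G_i)=k$ and hence uniformly dense with density $k$ by Theorem~\ref{cat92}; this is precisely the hypothesis of Theorem~\ref{kconthm}, giving $G\in\F_{k}$. For the inductive step I write $G=H *_k G_t$ with $H=((G_1 *_k G_2)*_k\cdots)*_k G_{t-1}$; by induction $H\in\F_{k}$, so Theorem~\ref{graphthm} shows that $H$ is itself uniformly dense with density $k$, and since $G_t\in{\cal K}(k)\cup\{K_1\}$ is again $K_1$ or uniformly dense with density $k$, a final application of Theorem~\ref{kconthm} yields $G\in\F_{k}$.

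For necessity I would induct on $|V(G)|$, the base case $|V(G)|=2$ forcing $G=kK_2=K_1*_k K_1$. In the inductive step, Theorem~\ref{kconthm} gives $G=G_1 *_k G_2$ with each $G_i$ equal to $K_1$ or uniformly dense with density $k$. For a nontrivial factor, (\ref{taueta}) gives $\tau(G_i)=\lfloor\eta(G_i)\rfloor=k$, so $\kappa'(G_i)\ge k$, and the decisive dichotomy is the following: if $\kappa'(G_i)>k$ then $\kappa'(G_i)>\eta(G_i)=d(G_i)=k$ and $G_i\in{\cal K}(k)$ is a terminal atom; if $\kappa'(G_i)=k$ then $\kappa'(G_i)=\tau(G_i)=k$ while $d(G_i)=k$, so Lemma~\ref{Lailem} gives $E_k(G_i)=\emptyset$, i.e. $|E(G_i)|$ is minimized and $G_i\in\F_{k}$, whence the inductive hypothesis (applicable since $|V(G_i)|<|V(G)|$) expresses $G_i$ as an iterated $k$-edge-join of members of ${\cal K}(k)\cup\{K_1\}$. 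Reassembling $G=G_1*_k G_2$ then produces the desired form.

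The main point, and the only place where the argument is more than bookkeeping, is the classification of the factors: separating the case $\kappa'(G_i)>k$, which terminates the recursion inside ${\cal K}(k)$, from $\kappa'(G_i)=k$, which re-enters $\F_{k}$ through Lemma~\ref{Lailem}. Well-foundedness of the recursion is automatic, since each application of Theorem~\ref{kconthm} strictly decreases the order of every nontrivial factor. Finally, one should read the displayed left-nested product $((G_1*_k G_2)*_k\cdots)*_k G_t$ as the unfolding of this recursion: the precise assertion being established is that the graphs generated from ${\cal K}(k)\cup\{K_1\}$ by iterated $k$-edge-joins are exactly the members of $\F_{k}$.
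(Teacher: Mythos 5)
Your argument is correct and is exactly the intended one: the paper states this corollary without proof as an immediate consequence of Theorem~\ref{kconthm}, and your induction simply iterates that theorem, the only substantive point being the dichotomy you isolate --- a nontrivial uniformly dense factor with $\kappa'>k$ is a terminal atom of ${\cal K}(k)$, while one with $\kappa'=k$ re-enters $\F_{k}$ (via Lemma~\ref{Lailem}, or equivalently directly via Theorem~\ref{graphthm}). Your closing caveat is also warranted: the statement really must be read as closure under iterated $k$-edge-joins rather than as a literal left-nested normal form, since the latter can fail --- e.g.\ join two copies of $K_4*_2K_4$ by two edges, one attached to each constituent $K_4$ on each side; the resulting graph lies in $\F_{2}$, but its unique $2$-edge-cut separates two factors neither of which belongs to ${\cal K}(2)\cup\{K_1\}$, so no left-nested expression exists --- and proving the recursive formulation, as you do, is the right choice.
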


Now we can characterize all the graphs $G$ with $\kappa'(G) = \tau(G) = k$.

\begin{theorem}
A graph $G$ with $n$ vertices satisfies $\kappa'(G) = \tau(G) = k$ if and only if 
$G$ has an edge-cut of size $k$ and a spanning subgraph in $\F_{k,n}$.
\end{theorem}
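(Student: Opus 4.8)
The plan is to handle the two directions separately: the sufficiency direction is routine, while the necessity direction requires a controlled edge-deletion argument that carefully preserves a small edge-cut.

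For sufficiency, suppose $G$ has an edge-cut of size $k$ and a spanning subgraph $H\in \F_{k,n}$. Since $H$ is spanning and adjoining edges never decreases the spanning-tree packing number, $\tau(G)\ge \tau(H)=k$, and therefore $\kappa'(G)\ge \tau(G)\ge k$. On the other hand, the edge-cut of size $k$ forces $\kappa'(G)\le k$. Combining these gives $\kappa'(G)=k$, and then $\tau(G)\le \kappa'(G)=k$ yields $\kappa'(G)=\tau(G)=k$. This direction introduces no new ideas.

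For necessity, assume $\kappa'(G)=\tau(G)=k$. Condition (i), an edge-cut of size $k$, is immediate from $\kappa'(G)=k$. The real work is to exhibit a spanning subgraph in $\F_{k,n}$. First I would fix an edge-cut $X$ of size $k$ and write $G=G_1*_X G_2$. The first key step is to verify $\tau(G_i)\ge k$ for $i=1,2$: since $\tau(G)=k$, $G$ contains $k$ edge-disjoint spanning trees, and because $|X|=k$ each of them uses exactly one edge of $X$; deleting that edge splits each tree into a spanning tree of $G_1$ and one of $G_2$, producing $k$ edge-disjoint spanning trees inside each $G_i$. Next, within each $G_i$ that is not $K_1$, I would delete edges one at a time while maintaining $\tau\ge k$, until no further deletion is possible, that is, until $E_k=\emptyset$; by Lemma~\ref{Lailem} the resulting graph $H_i$ then satisfies $d(H_i)=k$, and by Theorem~\ref{cat92} it is uniformly dense with density $k$ (one checks $\tau(H_i)=k$ via (\ref{taueta})). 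Reassembling $H=H_1*_X H_2$, with $X$ left untouched, gives a spanning subgraph of $G$, and Theorem~\ref{kconthm} yields $H\in \F_k$; since $|V(H)|=n$, in fact $H\in \F_{k,n}$, completing the proof.

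The step I expect to be the main obstacle is guaranteeing that the minimized spanning subgraph still has an edge-cut of size exactly $k$, equivalently $\kappa'(H)=k$ and not $\kappa'(H)>k$. A naive global edge-deletion that merely drives $E_k$ to $\emptyset$ could well destroy every small edge-cut, leaving a uniformly dense graph with higher edge connectivity. The device that overcomes this is to confine all deletions to the two sides $G_1,G_2$ of a fixed size-$k$ cut $X$, so that $X$ survives as an edge-cut of $H$; this is precisely what makes Theorem~\ref{kconthm} applicable and ensures $H\in \F_{k,n}$ rather than merely $\tau(H)=k$.
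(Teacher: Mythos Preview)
Your proof is correct and follows essentially the same strategy as the paper: split $G$ at a size-$k$ edge-cut $X$, reduce each side to a uniformly dense graph of density $k$, reassemble across $X$, and invoke the characterization of $\F_k$. The only minor difference is in the reduction step: the paper simply takes the union of $k$ edge-disjoint spanning trees inside each $G_i$ (which immediately gives density $k$) and then applies Theorem~\ref{graphthm}, whereas you reach the same endpoint via greedy edge deletion, Lemma~\ref{Lailem}, and Theorem~\ref{kconthm}.
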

\begin{proof}[{\bf Proof:}]
First, suppose that $G$ satisfies $\kappa'(G) = \tau(G) = k$. Then $G$ must have an
edge-cut $C$ of size $k$ since $\kappa'(G)=k$. Hence,
$G=G_1*_C G_2$ where $\tau(G_i)\ge k$ or $G_i=K_1$ for $i=1,2$.
If $G_i=K_1$, then let $G'_i=K_1$. Otherwise,
$G_i$ must have $k$ edge-disjoint spanning trees $T_1,T_2,\cdots,T_k$, and let $G'_i$
be the graph with $V(G'_i)=V(G_i)$ and $E(G'_i)=\cup_{j=1}^k E(T_j)$.
Let $G'=G'_1*_C G'_2$. Then $G'$ is a spanning subgraph of $G$ with $\kappa'(G')=k$
and $k=\tau(G')\le \eta(G')\le d(G')=k$. By Theorem~\ref{graphthm}, 
$G'\in \F_{k}$. Since $|V(G')|=n$, $G'\in \F_{k,n}$,
completing the proof of necessity.

To prove the sufficiency, first notice that $\kappa'(G)\le k$,
since $G$ has an edge-cut of size $k$.
Graph $G$ has a spanning subgraph $G'\in \F_{k,n}$, so $\tau(G')=k$,
whence $\tau(G)\ge k$. Thus $k\le \tau(G)\le \kappa'(G)\le k$, and we have 
$\kappa'(G)=\tau(G)=k$.
\end{proof}

\section{Extensions and restrictions with respect to $\F_{k,n}$}

Let $G$ be a connected graph with $n$ vertices and $H\in \F_{k,n}$.
If $G$ is a spanning subgraph of $H$, then $H$ is an {\bf $\F_{k,n}$-extension}
of $G$. If $H$ is a spanning subgraph of $G$, then $H$ is an {\bf $\F_{k,n}$-restriction}
of $G$.

\begin{theorem}
Let $G$ be a connected graph with $n$ vertices. Then each of the following holds.
\\(i) $G$ has an $\F_{k,n}$-restriction if and only if
$G=G_1*_{k'}G_2$ for some $k'\ge k$ and graph $G_i$ with 
$\eta(G_i)\ge k$ or $G_i=K_1$, for $i=1,2$.
\\(ii) $G$ has an $\F_{k,n}$-extension if and only if 
$\kappa'(G)\le k$ and $\gamma(G)\le k$.
\end{theorem}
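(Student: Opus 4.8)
The plan is to prove the two characterizations separately, in each case exhibiting the required structure for the necessity direction and verifying the defining properties of $\F_{k,n}$ (via Theorem~\ref{graphthm}) for the sufficiency direction. Throughout I would lean on the equivalences $\tau(G)=\lfloor\eta(G)\rfloor$ from (\ref{taueta}), the inequality chain (\ref{etdga}), and Theorem~\ref{cat92}, together with Lemma~\ref{Haaslem} for the extension half.

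For part (i), the spanning subgraph $H\in\F_{k,n}$ is exactly a spanning subgraph of $G$ that is uniformly dense with density $k$ and admits an edge-cut of size $k$. First I would assume $G$ has such a restriction $H$. By Theorem~\ref{kconthm}, $H=H_1*_kH_2$, and the edge-cut $X$ of size $k$ witnessing this decomposition of $H$ induces a partition of $V(G)$ into two parts $V(G_1),V(G_2)$; taking $G_i$ to be the subgraph of $G$ induced on $V(H_i)$, the crossing edges form an edge-cut of $G$ of some size $k'\ge k$, giving $G=G_1*_{k'}G_2$. Since $H_i$ is a spanning subgraph of $G_i$ with $\eta(H_i)=d(H_i)=k$, monotonicity of $\eta$ under adding edges yields $\eta(G_i)\ge k$ (or $G_i=K_1$). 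Conversely, given $G=G_1*_{k'}G_2$ with $\eta(G_i)\ge k$, I would use Lemma~\ref{Haaslem} (or equivalently Theorem~\ref{NaTu}) to extract from each $G_i$ a spanning subgraph with $k$ edge-disjoint spanning trees, namely a spanning subgraph $H_i$ with $\eta(H_i)=d(H_i)=k$; then selecting exactly $k$ of the $k'$ crossing edges and forming $H=H_1*_kH_2$ produces, by Theorem~\ref{kconthm}, a member of $\F_{k,n}$ that is spanning in $G$.

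For part (ii), the conditions $\kappa'(G)\le k$ and $\gamma(G)\le k$ should be exactly what is needed. For sufficiency, $\gamma(G)\le k$ lets me invoke Lemma~\ref{Haaslem}: one can add $k(n-1)-|E(G)|$ edges to $G$ to obtain a graph $H$ decomposable into $k$ edge-disjoint spanning trees, so $\tau(H)=k$ and $d(H)=k$, and uniform density of $H$ follows since $d(H)=k$ already forces $\gamma(H)=k=d(H)$ by the density bound. I then need $H$ to have an edge-cut of exactly size $k$; since $\kappa'(G)\le k\le\kappa'(H)$ and $H\supseteq G$ shares the same vertex set, the cut of $G$ of size $\le k$ sits inside a cut of $H$, and a short argument (using $\tau(H)=k\le\kappa'(H)$) pins $\kappa'(H)=k$, placing $H$ in $\F_{k,n}$ by Theorem~\ref{graphthm}. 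For necessity, if $H\in\F_{k,n}$ is a spanning supergraph of $G$ then $\gamma(G)\le\gamma(H)=d(H)=k$ by monotonicity of $\gamma$ under taking subgraphs, and $\kappa'(G)\le\kappa'(H)=k$.

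The main obstacle I anticipate is the bookkeeping in the sufficiency direction of part (ii): guaranteeing that the graph $H$ produced by Lemma~\ref{Haaslem} has an edge-cut of size \emph{exactly} $k$ rather than strictly larger. Adding edges to reach a $k$-tree-decomposable graph could in principle raise the edge connectivity above $k$, and I must rule this out using $\tau(H)=k$ together with $\kappa'(H)\ge\tau(H)$ and the fact that $H$ lies in $\F_{k,n}$ only if $\kappa'(H)=k$. The cleanest route is probably to note that any graph with $d(H)=k$ and $|V(H)|\ge2$ has $\eta(H)=k$, so by Theorem~\ref{NaTu} there is an edge set $X$ with $|X|=k(\omega(H-X)-1)$ achieving the minimum, which for $\omega(H-X)=2$ yields an edge-cut of size exactly $k$; I would argue such a $2$-component witness exists, closing the gap.
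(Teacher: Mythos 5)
Part (i) of your proposal and the necessity half of part (ii) are essentially the paper's argument (the paper gets $\gamma(G)\le k$ in (ii)-necessity via Lemma~\ref{Haaslem} rather than by monotonicity of $\gamma$, but your route is equally valid), so those portions are fine.

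The sufficiency half of part (ii) has a genuine gap, and it is exactly at the point you flagged. Applying Lemma~\ref{Haaslem} to $G$ as a whole gives a spanning supergraph $H$ with $\tau(H)=k$ and $d(H)=\eta(H)=\gamma(H)=k$, but nothing forces $\kappa'(H)=k$, and your proposed repairs do not close this. The claim that a minimizer $X$ of $\eta$ with $\omega(H-X)=2$ must exist is false: for $k=2$ take $G$ to be the path on $4$ vertices (so $\kappa'(G)=1\le 2$, $\gamma(G)\le 2$); Lemma~\ref{Haaslem} may return $H=K_4$, which satisfies $d=\eta=\gamma=\tau=2$ yet has $\kappa'(K_4)=3$, and the unique $\eta$-minimizer is $X=E(K_4)$ with $\omega(K_4-X)=4$ — there is no two-component witness. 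Likewise, ``the cut of $G$ of size $\le k$ sits inside a cut of $H$'' does not help: the same vertex bipartition can acquire extra crossing edges when $H$ is built (the cut $\{1\}$ versus $\{2,3,4\}$ has size $1$ in the path but size $3$ in $K_4$), and $\tau(H)=k$ only yields $\kappa'(H)\ge k$, never $\le k$. The paper avoids this by never reinforcing $G$ globally: it takes a minimum edge-cut $X$ of $G$ with $|X|=\kappa'(G)=k'\le k$ (this is where the hypothesis $\kappa'(G)\le k$ is actually used), writes $G=G_1*_XG_2$, applies Lemma~\ref{Haaslem} separately to each $G_i$ (legitimate since $\gamma(G_i)\le\gamma(G)\le k$) to get uniformly dense $H_i$ of density $k$, and then forms $H=H_1*_YH_2$ with $Y\supseteq X$ and $|Y|=k$. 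The $k$-edge-join guarantees the size-$k$ cut by construction, and Theorem~\ref{kconthm} then puts $H$ in $\F_{k,n}$. You need this cut-first, reinforce-each-side-separately structure; the global reinforcement cannot be patched.
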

\begin{proof}[{\bf Proof:}]
(i) Suppose that $G$ has an $\F_{k,n}$-restriction $H$, by Theorem~\ref{kconthm}, $H=H_1*_k H_2$
where $\tau(H_i)=\eta(H_i)=d(H_i)=k$ or $H_i=K_1$ for $i=1,2$. Since
$H$ is a spanning subgraph of $G$, we have $G=G_1*_{k'} G_2$ for some $k'\ge k$ 
such that $H_i$ is a spanning subgraph of $G_i$ for $i=1,2$. 
If $H_i=K_1$, then $G_i=K_1$, otherwise,
$\eta(G_i)\ge \tau(G_i)\ge \tau(H_i)=k$ for $i=1,2$, by (\ref{taueta}).

To prove the sufficiency, it suffices to show that $G$ has a spanning subgraph $H\in \F_{k,n}$.
Since $G=G_1*_{k'}G_2$, there exists an edge-cut $X$ of size $k'$ such that $G=G_1*_{X}G_2$. 
Let $Y$ be a subset of size $k$ of $X$. For $i=1,2$, if $G_i=K_1$, then let $H_i=K_1$.
Otherwise, $\eta(G_i)\ge k$, and by (\ref{taueta}), $\tau(G_i)=\lfloor\eta(G_i)\rfloor\ge k$,
and then $G_i$ has $k$ edge-disjoint spanning trees $T_{1,i},T_{2,i},\cdots,T_{k,i}$. Let $H_i$ be the graph
with $V(H_i)=V(G_i)$ and $E(H_i)=\cup_{j=1}^k E(T_{j,i})$, for $i=1,2$. Let $H=H_1*_Y H_2$.
Then $H$ is a spanning subgraph of $G$ and $\kappa'(H)=\tau(H)=k$. Since $d(H)=k$, by Lemma~\ref{Lailem},
$H$ has the minimum number of edges with $\tau(H)=k$. Thus $H\in \F_{k,n}$.

(ii) If $G$ has an $\F_{k,n}$-extension $H$, then $G$ is a spanning subgraph of $H$ and 
$\kappa'(H)=\tau(H)=k$ with minimum number of edges. Then $\kappa'(G)\le k$. By Theorem~\ref{graphthm},
$d(H)=k$, i.e. $|E(H)|=k(|V(H)|-1)=k(|V(G)|-1)$. Thus $|E(H)|-|E(G)|=k(|V(G)|-1)-|E(G)|$, and
by Lemma~\ref{Haaslem}, $\gamma(G)\le k$.

To prove the sufficiency, it suffices to show that there is a graph $H\in \F_{k,n}$ with 
a spanning subgraph $G$. Let $\kappa'(G)=k'$, then $k'\le k$, and $G$ has an edge-cut $X$ of size $k'$.
Hence, $G=G_1*_X G_2$. For $i=1,2$, if $G_i=K_1$, then let $H_i=K_1$. Otherwise, 
since $\gamma(G)\le k$, by the definition of $\gamma(G)$, we have $\gamma(G_i)\le k$. 
By Lemma~\ref{Haaslem},
$G_i$ can be reinforcing to a graph $H_i$ which can be decomposed into $k$ edge-disjoint spanning trees.
Then $|E(H_i)|=k(|V(H_i)|-1)=k(|V(G_i)|-1)$, whence $d(H_i)=k$. Since $k=\tau(H_i)\le \eta(H_i)\le d(H_i)=k$,
we have $\eta(H_i)=d(H_i)=k$, and by Theorem~\ref{cat92}, $H_i$ is uniformly dense, for $i=1,2$.
Let $H=H_1*_Y H_2$ where $Y$ is an edge subset of size $k$ with $X\subseteq Y$. Then $G$ is a spanning subgraph of $H$.
By Theorem~\ref{kconthm}, $H\in \F_{k,n}$, and this completes the proof of the theorem.
\end{proof}

\section{Acknowledgment}
This work was part of X. Gu's PhD dissertation \cite{Gu13}, and was published with a different title
``Characterizations of strength extremal graphs'' in \cite{GLLY14}.

\end{document}